\newtheorem{thm}{Theorem}[section]
\newtheorem{lem}[thm]{Lemma}
\newtheorem{prop}[thm]{Proposition}
\newtheorem{conj}[thm]{Conjecture}
\theoremstyle{defn}
\newtheorem{defn}{Definition}
\newtheorem{rem}{Remark}
\newtheorem*{notn}{Notation}
\newcommand{\Ca} {\mathcal{C}_{m,n}}
\newcommand{\gray}{\cellcolor[gray]{0.1} }
\newcommand{\gc}{ [ \hspace{-0.65mm} [}
\newcommand{\dc}{]  \hspace{-0.65mm} ]}
\newcommand{\ia}{i,\alpha}
\newcommand{\hc}{\mathcal{H}}
\newcommand{\U}{U_q^+(\g)}
\newcommand{\spec}{{\rm Spec}}
\newcommand{\prim}{{\rm Prim}}
\newcommand{\g}{\mathfrak{g}}
\newcommand{\comp}{\mathbb{C}}
\def\C{\mathbb{K}}
\def\co{{\mathcal O}}
\def\oqmtwo{\co_q(M_2)}
\def\oqmm13{\co_q(M_{1,3})}
\def\oqm23{\co_q(M_{2,3})}
\def\oqmmn{\co_q(M_{m,n})}
\def\oqmnm{\co_q(M_{n,m})}
\def\ia{i,\alpha}
\def\eqref#1{(\ref{#1})}
\newif\ifpdf
\title[]{On the dimension of $H$-strata in quantum algebras}
\author{J.~Bell and S.~Launois}
\thanks{The first named author thanks NSERC for its generous support.  The second named author research was supported by a Marie Curie European Reintegration Grant within the $7^{\mbox{th}}$ European Community Framework Programme}
\keywords{Prime spectrum, Zariski topology, stratification, quantum matrices.}
\subjclass[2000]{16W35; 20G42}
\address{Jason Bell\\
Department of Mathematics\\
Simon Fraser University\\
Burnaby, BC V5A 1S6, Canada
}
\email{jpb@math.sfu.ca}
\address{
St\'ephane Launois \\
School of Mathematics, Statistics \& Actuarial science \\
University of Kent\\
Canterbury, Kent CT2 7NF, United Kingdom}
\email{s.launois@kent.ac.uk}
\begin{document}
\bibliographystyle{plain}

\begin{abstract} We study the topology of the prime spectrum of an algebra supporting a rational torus action. More precisely, we study inclusions between prime ideals that are torus-invariant using the $H$-stratification theory of Goodearl and Letzter on one hand and the theory of deleting derivations of Cauchon on the other. We also give a formula for the dimensions of the $H$-strata described by Goodearl and Letzter. We apply the results obtained to the algebra of $m \times n$ generic quantum matrices to show that the dimensions of the $H$-strata 
are bounded above by the minimum of $m$ and $n$, and that moreover all the values between $0$ and this bound are achieved.
\end{abstract}
\maketitle

\section{Introduction} 
\label{int}

We denote by $R=\oqmmn$ the standard quantization of the ring of
regular functions on $m \times n$ matrices with entries in a field $\mathbb{K}$; it is
the $\mathbb{K}$-algebra generated by the $m \times n $ indeterminates
$Y_{\ia}$, $1 \leq i \leq m$ and $ 1 \leq \alpha \leq n$, subject to the
following relations:\\ \[
\begin{array}{ll}
Y_{i, \beta}Y_{i, \alpha}=q^{-1} Y_{i, \alpha}Y_{i ,\beta},
& (\alpha < \beta); \\
Y_{j, \alpha}Y_{i, \alpha}=q^{-1}Y_{i, \alpha}Y_{j, \alpha},
& (i<j); \\
Y_{j,\beta}Y_{i, \alpha}=Y_{i, \alpha}Y_{j,\beta},
& (i <j,  \alpha > \beta); \\
Y_{j,\beta}Y_{i, \alpha}=Y_{i, \alpha} Y_{j,\beta}-(q-q^{-1})Y_{i,\beta}Y_{j,\alpha},
& (i<j,  \alpha <\beta), 
\end{array}
\]
where $q \in \mathbb{K}^*$ is not a root of unity. We note that the torus $(\mathbb{K}^*)^{m+n}$ acts on $R$ by $\mathbb{K}$-algebra automorphisms via the action
$$(a_1,\dots,a_m,b_1,\dots,b_n)\cdot Y_{\ia} = a_i b_\alpha Y_{\ia} \quad {\rm
for~all} \quad \: (\ia)\in \gc 1,m \dc \times \gc 1,n \dc.$$ 

Understanding this torus action has been responsible for most of the important advances that have been made in the study of quantum matrices.  The most important object of study is the prime spectrum of $R$.  In analogy with algebraic geometry, where great understanding of commutative rings comes from the study of their prime spectra, one seeks to understand the prime spectrum of $R$ and its topology.  The noncommutativity introduced by the parameter $q$ in quantum matrices makes the prime spectrum of $R$ harder to understand than the prime spectrum of the coordinate ring of the variety of $m\times n$ matrices and much work has been done in understanding the structure of this topological object. The most important of these advances is the stratification theory of Goodearl and Letzter \cite{gl2}.

To describe the work of Goodearl and Letzter, we give a few basic definitions.
Let $A$ be a $\mathbb{K}$-algebra with a group $H$ acting on it by $\mathbb{K}$-algebra automorphisms. A two-sided ideal $I$ of $A$ is said to be {\em $H$-invariant} if $h\cdot I=I$ for all
$h \in H$.  An {\em $H$-prime ideal} of $A$ is a proper $H$-invariant ideal
$J$ of $A$ such that whenever $J$ contains the product of two
$H$-invariant ideals of $A$, $J$ contains at least one of them. We denote
by $H$-$\spec(A)$ the set of all $H$-prime ideals of $A$. Observe
that if $P$ is a prime ideal of $A$ then
\begin{equation}
(P:H)\ := \ \bigcap_{h\in H} h\cdot P
\end{equation}
 is an $H$-prime ideal of
$A$. This observation allowed Goodearl and Letzter \cite{gl2}
(see also \cite{bgbook}) to construct a
stratification of the prime spectrum of $A$ that is indexed by the
$H$-spectrum. Indeed, let $J$ be an $H$-prime ideal of $A$. We denote
by $\spec_J (A)$ the {\em $H$-stratum} associated  to $J$; that is,  
\begin{equation}
\spec_J (A)=\{ P \in \spec(A) \mbox{ $\mid$ } (P:H)=J \}.
\end{equation}
Then the $H$-strata of $\spec(A)$ form a partition of $\spec(A)$
\cite[Chapter II.2]{bgbook}; that
is,
\begin{equation}
\label{eq:Hstratification}
 \spec(A)= \bigsqcup_{J \in H\mbox{-}\spec(A)}\spec_J(A).
 \end{equation}
This partition is the so-called {\em $H$-stratification} of $\spec(A)$. 

When the $H$-spectrum of $A$ is finite this
partition is a powerful tool in the study of the prime spectrum of $A$. 

As we work in the generic case where $q$ is not a root of unity, the ring $R$ of $m\times n$ quantum matrices has a finite  $\hc=(\mathbb{K}^*)^{m+n}$-spectrum.  Remarkably, for each $\hc$-prime $J$, the space $\spec_J(R)$ is homeomorphic to ${\rm Spec}(\mathbb{K}[z_1^{\pm 1},\ldots ,z_d^{\pm 1}])$ for some $d$ which depends on $J$.  This $d$ is simply the (Krull) dimension of the $\hc$-stratum $\spec_J(R)$.

The work of Goodearl and Letzter spurred much research into the structure of $\spec(R)$ in terms of the $\hc$-spectrum.  Some of the main themes in the study of the $\hc$-spectrum have been to compute its size, to compute the structure of the poset of $\hc$-primes under inclusion, and to compute the dimensions of the $\hc$-strata and how they are distributed.
  
The question of the size of the $\hc$-spectrum of $R$ was answered by Cauchon \cite{c2}.
For many years the finiteness of the $\hc$-spectrum of $R$ was 
known, but no formula for its size was known---except for small values of $m$ and $n$---due to the complicated nature of the relations in $R$.   Cauchon \cite{c2} used his theory of deleting derivations to compute the size of the $\hc$-spectrum of $R$.   In particular, the set of $\hc$-primes is in $1$-$1$ correspondence with a set of combinatorial objects called Cauchon diagrams.  In fact, Cauchon's method applies to a much broader class of algebras, the so-called CGL extensions (CGL stands for Cauchon-Goodearl-Letzter), and the term Cauchon diagram has now acquired a more general meaning than the one we now describe for quantum matrices.

\begin{defn} {\em An $m\times n$ \emph{Cauchon diagram} $C$ is simply an $m\times n$ grid consisting of $mn$ boxes in which certain boxes are coloured black.  We require that the collection of black boxes have the following property:
\vskip 2mm
\noindent If a box is black, then either every box strictly to its left is black or every box strictly above it is black.\\
We let $\Ca$ denote the collection of $m\times n$ Cauchon diagrams.} \label{cdef}
\end{defn}
\begin{figure}
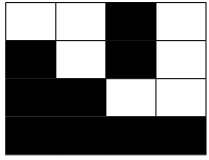

\begin{tabular}{|p{0.30cm}|p{0.30cm}|p{0.30cm}|p{0.30cm}|}

\hline
 &  & \gray &  
 \\
\hline
\gray & & \gray &  
 \\
\hline
\gray & \gray & &  \\
\hline
\gray & \gray & \gray & \gray  \\
\hline

\end{tabular}
\caption{ An example of a $4\times 4$ Cauchon diagram.}
\label{ex-figure}
\end{figure}

Cauchon \cite{c2} showed that the $\hc$-primes of the ring $R$ of $m\times n$ quantum matrices are paramaterized by the collection of $m\times n$ Cauchon diagrams and he also gave a closed formula for the size of this set.  Moreover it is known that the poset $\hc$-$\spec(R)$ 
(under inclusion) is isomorphic to a subposet of the symmetric group $S_{m+n}$ endowed with the Bruhat order \cite{lau}.

Some of the major questions that remain are to determine the possible dimensions of $\hc$-strata that can occur in $R$ and to give a formula for the dimension of a stratum in terms of the associated Cauchon diagram. We answer these questions. In particular, we prove the following result.

\begin{thm} Let $m$ and $n$ be natural numbers.  Then the dimensions of $\hc$-strata in $\oqmmn$ are all at most $\min(m,n)$; moreover, for each $d\in \{0,1,\ldots, \min(m,n)\}$ there exists a $d$-dimensional $\hc$-stratum.
\label{intro-bound}
\end{thm}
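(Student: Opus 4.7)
Our plan proceeds as follows. By results established in the body of the paper, $\dim\spec_J(R)$ equals the kernel dimension of a skew-symmetric integer matrix $M_C$, indexed by the set $W$ of white boxes of the Cauchon diagram $C=C_J$ and obtained as the principal submatrix on $W$ of the commutation matrix $\Lambda$ of the quantum affine space attached to $R=\oqmmn$ via Cauchon's deleting-derivations process. A direct computation shows that $\Lambda$ has the Kronecker-sum form $\Lambda=A_m\otimes I_n+I_m\otimes A_n$, where $A_k$ denotes the $k\times k$ skew-symmetric sign matrix with entries $(A_k)_{ij}=\mathrm{sgn}(j-i)$; equivalently, $\Lambda_{(i,\alpha),(j,\beta)}$ vanishes unless $i=j$ or $\alpha=\beta$, and is otherwise $\pm1$. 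The theorem thereby reduces to a combinatorial statement about these submatrices.

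We begin with a symmetry reduction. The transpose map $Y_{i,\alpha}\mapsto Y_{\alpha,i}$ furnishes an $\hc$-equivariant anti-isomorphism $\oqmmn\to\oqmnm$ which preserves dimensions of $\hc$-strata while swapping the roles of $m$ and $n$, so we may assume $m\le n$. For the achievability part, for each $d\in\{0,1,\ldots,m\}$ let $C_d$ be the diagram with set of white boxes $W_d:=\{(k,k):1\le k\le d\}$. A direct check of the Cauchon condition shows $C_d$ is valid: any black box $(i,\alpha)$ with $i>d$ has its entire row to the left black (since all of row $i$ is black); any black $(i,\alpha)$ with $i\le d$ and $\alpha>i$ has its entire column above black (no box $(i',\alpha)$ with $i'<i$ lies on the diagonal, since $i'<i\le d<\alpha$); and any black $(i,\alpha)$ with $i\le d$ and $\alpha<i$ again has its entire row to the left black. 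Since distinct positions on the diagonal share neither a row nor a column, $M_{C_d}$ is the zero matrix on $W_d$, so $\dim\ker M_{C_d}=d$, yielding a $d$-dimensional $\hc$-stratum for each $d\in\{0,1,\ldots,m\}$.

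It remains to prove the upper bound $\dim\ker M_C\le m$ for every Cauchon diagram $C$. Write $M_C=B+K$, where $B$ collects the same-row contributions of $\Lambda$ restricted to $W$ and $K$ the same-column contributions. Then $B$ is block-diagonal when $W$ is ordered by rows, with diagonal blocks the sign matrices $A_{|W_i|}$ attached to the white boxes in each row; similarly $K$ is block-diagonal in the columnwise ordering. Each sign block has nullity at most one, so $\dim\ker B\le m$ and $\dim\ker K\le n$, but neither bound is inherited by $M_C=B+K$ in general, and controlling $\dim\ker(B+K)$ is the heart of the argument. The strategy is to leverage the Cauchon defining condition to exhibit, explicitly, a collection of $|W|-m$ linearly independent rows in $M_C$, yielding $\rk M_C\ge|W|-m$ and hence $\dim\ker M_C\le m$. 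Carrying through this combinatorial rank estimate, which hinges on the "left-or-above" constraint restricting the shape of $W$, is the principal technical obstacle; the rest of the proof is routine.
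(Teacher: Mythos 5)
Your reduction to the kernel of the skew-adjacency matrix, the transpose symmetry, and the achievability part are all sound. In fact your existence argument is a pleasant alternative to the paper's: you take the diagram whose white boxes are the $d$ diagonal positions $(k,k)$, $k\le d$, observe that no two of them share a row or a column so that $M(C_d)=0$ and the kernel is $d$-dimensional, whereas the paper takes the diagram whose white boxes form the full upper-left $d\times d$ block and then cites an external computation (Launois--Lenagan) that the $(0)$-stratum of $d\times d$ quantum matrices is $d$-dimensional. Your version is self-contained. (Your inequality chain ``$i'<i\le d<\alpha$'' in the verification of the Cauchon condition is slightly off --- $\alpha>d$ need not hold --- but the conclusion $i'\neq\alpha$ follows already from $i'<i<\alpha$, so this is cosmetic.)

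The genuine gap is the upper bound, which is the main content of the theorem. You correctly identify that the separate bounds $\dim\ker B\le m$ and $\dim\ker K\le n$ do not transfer to $M_C=B+K$, and you then reduce the problem to exhibiting $|W|-m$ linearly independent rows of $M_C$ --- but you do not produce them, explicitly deferring ``the principal technical obstacle.'' This is precisely where all the work lies. The paper's Lemma 4.7 supplies the missing construction: order the white boxes row by row, let $a_1,\ldots,a_n$ be the labels of the topmost white box in each column, and form the unipotent lower-triangular matrix $S$ that subtracts from the row of each non-topmost white box the row of the topmost white box of its column. One then checks (using only the row/column incidence structure, not the Cauchon condition per se) that in $S\cdot M(C)$ every entry $(i,j)$ with $i\notin\{a_1,\ldots,a_n\}$ and $i,j$ in different rows of the diagram vanishes, so that deleting the rows and columns indexed by $a_1,\ldots,a_n$ leaves a block lower-triangular matrix whose diagonal blocks have the form $-I+(\text{real skew-symmetric})$; these are invertible because real skew-symmetric matrices have purely imaginary spectrum. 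Hence $\rk M(C)\ge |W|-n$ (after reducing to diagrams with no all-black columns), giving the bound. Without this lemma or an equivalent rank estimate, your proposal does not establish that the strata have dimension at most $\min(m,n)$.
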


Previously, the best known bound for the dimensions of $\hc$-strata in $\oqmmn$ was $m+n-1$, so this result represents a significant improvement. 

Regarding the dimension of the stratum associated to a given Cauchon diagram, we give a formula which only relies on the Cauchon diagram, 
see Proposition \ref{dimstratum}. In fact, we are able to give a formula for a much broader class of algebras, called \emph{uniparameter CGL extensions}. This class of algebras inlcudes in particular the so-called quantum Schubert cells $U_q[w]$ defined by De Concini, Kac and Procesi (see Section 3.3). 
The algebra $U_q[w]$ supports a rational torus action and the theory of Cauchon and Goodearl-Letzter can be applied to this algebra. The torus-invariant primes of this algebra have been studied recently and independently by Cauchon and M\'eriaux on one hand \cite{CM} and Yakimov on the other hand \cite{Y}.  As a consequence of our formula, we are able to give a formula for the dimension of the $(0)$-stratum of $U_q[w]$ which only depends on the Weyl group element $w$.

Regarding the $\hc$-strata in $\oqmmn$, we show even more.  We say that an $m\times n$ Cauchon diagram $C$ contains another $m\times n$ Cauchon diagram $C'$ if whenever a square is coloured black in $C'$, the corresponding square is also coloured black in $C$.  After additional investigations we are able to prove the following result.
\begin{thm} \label{intro2}
Let $P$ be an $\hc$-prime of $\oqmmn$ whose associated $\hc$-stratum is $d$-dimensional.  Then there exists a chain $$P=P_0\subsetneq P_1 \subsetneq \cdots \subsetneq P_d$$ of $\hc$-primes such that the dimension of the $\hc$-stratum associated to $P_i$ is $d-i$ and such that $$C_0\subsetneq C_1 \subsetneq \cdots \subsetneq C_d,$$
where $C_i$ is the Cauchon diagram associated to $P_i$.
\end{thm}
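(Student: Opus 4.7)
The plan is to proceed by induction on the dimension $d$ of the $\hc$-stratum $\spec_P(R)$. The case $d=0$ is trivial (take the chain of length zero). Assume the result holds for all $\hc$-primes whose stratum has dimension less than $d$, and let $P$ have Cauchon diagram $C_0$ with $\dim \spec_P(R) = d \geq 1$. It suffices to produce a single $\hc$-prime $P_1$ with $P_0 = P \subsetneq P_1$, Cauchon diagram $C_1 \supsetneq C_0$, and $\dim \spec_{P_1}(R) = d-1$; the inductive hypothesis applied to $P_1$ then supplies the remainder of the chain.

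To construct $P_1$, I would use two inputs. First, the dimension formula from Proposition \ref{dimstratum} expresses $\dim \spec_J(R)$ as a combinatorial quantity read directly off the Cauchon diagram of $J$. Second, the paper's earlier results on inclusions of torus-invariant primes (obtained via the deleting derivations theory of Cauchon combined with the Goodearl--Letzter stratification) allow one to promote a suitable combinatorial inclusion $C_0 \subsetneq C_1$ to a containment $P_0 \subseteq P_1$ of $\hc$-primes. The strategy is therefore: identify a single white square of $C_0$ whose blackening produces a valid Cauchon diagram $C_1$ (i.e., one in which every black box still has either all boxes strictly to its left or all boxes strictly above it black) such that the combinatorial dimension drops by exactly $1$ and the corresponding $\hc$-prime $P_1$ contains $P_0$. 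Since distinct Cauchon diagrams correspond to distinct $\hc$-primes, $C_0 \neq C_1$ then forces $P_0 \neq P_1$ and the inclusion is strict.

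The main obstacle is the combinatorial lemma asserting that such a white square always exists whenever $d \geq 1$. It is not enough to locate a white square whose blackening preserves the Cauchon condition; we must simultaneously ensure (i) that the specific combinatorial invariant controlling $\dim \spec_J(R)$ decreases by exactly $1$ (not $0$, and not more than $1$), and (ii) that the corresponding change on the prime side is an honest inclusion $P_0 \subseteq P_1$. The verification of (i) relies on a careful analysis of the dimension formula, using the hypothesis $d \geq 1$ to locate a \emph{flexible} position contributing one unit to the dimension; the verification of (ii) relies on translating this combinatorial move into a controlled step in the Cauchon deleting derivations algorithm and invoking the inclusion results from the earlier sections of the paper. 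The delicate point is choosing the square so that both conditions hold at once, and this compatibility is what distinguishes Theorem \ref{intro2} from the weaker assertion that \emph{some} $\hc$-prime strictly containing $P$ exists in a strictly lower-dimensional stratum.
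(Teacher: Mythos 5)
Your overall architecture is the same as the paper's: induct on $d$, and at each step blacken one white box of the current Cauchon diagram so that the diagram stays a Cauchon diagram and the stratum dimension drops by exactly one, then invoke the monotonicity result (Theorem \ref{notintro-increasing}) to get $P_i \subsetneq P_{i+1}$ and the dimension formula (Proposition \ref{dimstratum}) to track dimensions. One small simplification you miss: your condition (ii) requires no extra compatibility at all, since Theorem \ref{notintro-increasing} converts \emph{any} strict inclusion of Cauchon diagrams into a strict inclusion of $\hc$-primes; there is no tension between (i) and (ii) to resolve.

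The genuine gap is that you never prove the existence statement you correctly identify as "the main obstacle," and this is in fact the entire mathematical content of the step (Proposition \ref{step1} in the paper). Saying that one should "locate a flexible position contributing one unit to the dimension" is not an argument. The paper's proof is a nontrivial piece of linear algebra: let $T$ be the set of white boxes whose blackening yields a valid Cauchon diagram, and suppose for contradiction that for every $i \in T$ the kernel of $M(C_i)$ still has dimension at least $e$. A parity argument (Remark \ref{remarkparity}: skew-symmetric matrices have kernels of dimension congruent to their size mod $2$) upgrades this to dimension at least $e+1$; padding kernel vectors of $M(C_i)$ with a zero in position $i$ and intersecting with the hyperplane defined by the $i$th row of $M(C)$ then forces every vector of $\ker(M(C))$ to vanish in coordinate $i$, for every $i \in T$. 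Since the top white box $a_j$ of each column lies in $T$, all kernel vectors vanish on the coordinates $a_1,\ldots,a_n$, and this contradicts Lemma \ref{lem: 1}, which produces a lower-triangular $S$ such that deleting rows and columns $a_1,\ldots,a_n$ from $S\cdot M(C)$ leaves an invertible matrix. Finally, the fact that the drop is exactly one (not more) follows by restricting a basis of $\ker(M(C))$ to vectors vanishing in coordinate $i$. Without this argument -- or some replacement for it -- your proof does not establish the theorem; it only reduces it to an unproved combinatorial claim.
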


To prove this chain result, we need to understand the relation between inclusion of Cauchon diagrams and inclusion of the corresponding $H$-primes.  One might naively expect these two posets to be isomorphic, but this is not the case.  For instance, 
consider the algebra of $2 \times 2$ quantum matrices $\oqmtwo$, which is generated by four indeterminates $Y_{1,1}, Y_{1,2},Y_{2,1},Y_{2,2}$ subject to the relations given in the beginning of this section.  It is well known that the ideal $ ( Y_{1,1} Y_{2,2}-q Y_{1,2}Y_{2,1} )$ generated by the quantum determinant and $( Y_{2,1},Y_{2,2} )$ are $\hc$-invariant prime ideals in $\oqmtwo$. Clearly, 
$ ( Y_{1,1} Y_{2,2}-q Y_{1,2}Y_{2,1} ) \subsetneq ( Y_{2,1},Y_{2,2} )$, but the corresponding Cauchon diagrams, which can be represented by the pictures in Figure \ref{intro-figure2}, are not comparable.

\begin{figure}
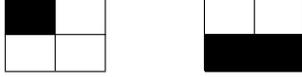
 

\begin{tabular}{|p{0.30cm}|p{0.30cm}|p{0.30cm} p{0.30cm}|p{0.30cm}|p{0.30cm}|}

\cline{1-2} \cline{5-6}
\gray &  &  &  &  & 
 \\
\cline{1-2} \cline{5-6}
 & &  &  & \gray& \gray
 \\
\cline{1-2} \cline{5-6}
\end{tabular}
\caption{Cauchon diagrams representing the ideal generated by the quantum determinant and by $Y_{2,1},Y_{2,2}$ respectively in $\oqmtwo$.}
\label{intro-figure2}
\end{figure}

Thus two $\hc$-primes can be comparable (for the inclusion) and yet their corresponding Cauchon diagrams may fail to be comparable.  Interestingly, if we consider things from the other direction, we see there is a definite relation between these two posets.

\begin{thm}
\label{intro-increasing1}
If $C$ and $C'$ are two $m\times n$ Cauchon diagrams with $C \subsetneq C'$, then $J_{C} \subsetneq J_{C'}$, where $J_{C}$ and $J_{C'}$ denote respectively the $\hc$-primes of $\oqmmn$ associated to $C$ and $C'$. 
\end{thm}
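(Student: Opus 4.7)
The plan is to use Cauchon's deleting derivations algorithm to reduce the theorem to the quantum affine space $\overline{R}$, where the statement is immediate, and then to lift strict containment back up to $R$.

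Recall the setup. Cauchon's algorithm produces a chain of algebras $R = R^{(t+1)}, R^{(t)}, \ldots, R^{(2)} = \overline{R}$, together with canonical injections
$\phi_s : \hc\text{-}\spec(R^{(s+1)}) \hookrightarrow \hc\text{-}\spec(R^{(s)})$
at each step. Their composition $\phi = \phi_2 \circ \cdots \circ \phi_t$ sends each $\hc$-prime $J_C$ of $R$ to the monomial $\hc$-prime $\overline{J_C} = \langle \overline{Y}_{i,\alpha} \mid (i,\alpha) \in C \rangle$ of $\overline{R}$. In the quantum affine space, strict containment of Cauchon diagrams gives strict containment of the associated monomial ideals at once: $C \subsetneq C'$ forces $\overline{J_C} \subsetneq \overline{J_{C'}}$.

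The heart of the proof is to establish that each $\phi_s$ is \emph{order-reflecting}: if $\phi_s(P) \subseteq \phi_s(Q)$, then $P \subseteq Q$. Granted this, the theorem follows quickly: from $C \subsetneq C'$ we obtain $\overline{J_C} \subsetneq \overline{J_{C'}}$; applying order-reflection at each $\phi_s$ in turn yields $J_C \subseteq J_{C'}$ in $R$; and the injectivity of $\phi$ together with $C \neq C'$ forces $J_C \neq J_{C'}$, giving strict inclusion. It is worth stressing that the $2 \times 2$ example in the introduction does \emph{not} obstruct this claim: it exhibits $\hc$-primes $J_1 \subsetneq J_2$ whose images under $\phi$ are incomparable, which merely shows that $\phi$ is not order-\emph{preserving}; being order-\emph{reflecting} is a logically independent property and is entirely consistent with the example.

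To prove order-reflection of $\phi_s$, I would use Cauchon's explicit description of the embedding: the algebras $R^{(s)}$ and $R^{(s+1)}$ share a common localization $R^{(s+1)}[y^{-1}]$, where $y = Y^{(s+1)}_{i_s,\alpha_s}$ is the current variable, and for an $\hc$-prime $P$ of $R^{(s+1)}$ with $y \notin P$ one has $\phi_s(P) = P \cdot R^{(s+1)}[y^{-1}] \cap R^{(s)}$, while for $P$ with $y \in P$, $\phi_s(P)$ is given by a quotient prescription. Given $\phi_s(P) \subseteq \phi_s(Q)$, a case analysis on whether $y$ lies in $P$ and/or $Q$ should yield $P \subseteq Q$. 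The main obstacle is the mixed case in which $y \notin P$ but $y \in Q$: here $\phi_s$ uses different prescriptions on either side, and recovering the inclusion requires a careful matching of the localization-contraction description of $\phi_s(P)$ with the quotient description of $\phi_s(Q)$, exploiting the compatibility between these two constructions built into Cauchon's framework.
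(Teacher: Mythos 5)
Your overall architecture is the same as the paper's: reduce to the quantum affine space, where $C\subsetneq C'$ gives $K_C\subsetneq K_{C'}$ immediately, and then lift the inclusion back up step by step through the $\varphi_j$'s, with strictness coming from injectivity of the canonical embedding. You also correctly identify where the difficulty sits, namely the mixed case $X_j^{(j)}\notin \varphi_j(P)$, $X_j^{(j)}\in\varphi_j(Q)$ (the other mixed case cannot occur once the inclusion at the lower level is assumed). The two unmixed cases are indeed immediate, because Cauchon proves that $\varphi_j$ restricts to an increasing homeomorphism with increasing inverse on each of the two pieces of $\spec(R^{(j+1)})$ separately.

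The gap is that the mixed case is the entire content of the theorem, and you have not supplied an argument for it --- there is no ready-made ``compatibility between the two constructions'' in Cauchon's framework that delivers it; indeed $\varphi_j$ is \emph{not} a homeomorphism onto its image globally, and no general order-reflection statement for $\varphi_j$ on all of $\spec$ is available (nor is one likely true). What the paper actually does, and what you would need to reproduce, uses $H$-primeness in an essential way: writing $R^{(j+1)}=A[Z_j;\sigma,\delta]$ and introducing the homomorphism $\theta(a)=\sum_{k\ge 0}\frac{(1-q_j)^{-k}}{[k]!_{q_j}}\,\delta^k\sigma^{-k}(a)Z_j^{-k}$ with $\theta(Z_i)=Y_i$, one takes $z=\sum_t a_tZ_j^t\in P$, re-expresses each $a_t$ in terms of $\theta$, and observes that $zZ_j^m\in PS_j^{-1}\cap R^{(j)}=P'$ for suitable $m$. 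Because $P'$ is an $H$-prime of $R^{(j)}=A'[Z_j;\sigma]$, the coefficient of each power of $Z_j$ in this expression lies in $P'$ (this is \cite[Corollary 2.4]{llr}, and it is the step that genuinely requires $H$-invariance); extracting the degree-$m$ coefficient, pushing it through $g_j$ into $Q/(Z_j)$, and using $\delta(a)=Z_ja-\sigma(a)Z_j\in Q$ finally yields $a_0\in Q$ and hence $z\in Q$. Without this computation (or a substitute for it), your proof reduces to a correct statement of the problem rather than a solution of it; as written, ``order-reflection of $\varphi_j$'' is an assertion, not something you have established.
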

Again, we are able to prove Theorem \ref{intro-increasing} for a much broader class of algebras, called CGL extensions. This class of algebras contains quantum affine spaces, the algebra of $m \times n$ quantum matrices, positive parts of quantized enveloping algebras, and many other interesting families of algebras.  These algebras support a rational action by a torus $H$ and have the property that they have only finitely many $H$-strata in the Goodearl-Letzter stratification. 

The deleting derivations theory of Cauchon applies to CGL extensions and this gives new insights 
into the $H$-stratification in these cases.  The $H$-primes are, just as in the case of quantum matrices, in $1$-$1$ correspondence with combinatorial objects called Cauchon diagrams. These diagrams depend on the algebra and a CGL extension other than quantum matrices has a different collection of Cauchon diagrams than the ones described earlier for $m\times n$ quantum matrices.  Cauchon's \cite{c2} original description was just for quantum matrices, but other authors have since applied his deleting derivations theory to other classes of algebras. Lenagan, Rigal and the second named author \cite{llr2} gave a description of Cauchon diagrams for the quantum Grassmannian. M\'eriaux \cite{meriaux} gave a description of these diagrams for the positive part of the quantized enveloping algebra of a simple Lie algebra, while Cauchon and M\'eriaux have recently described Cauchon diagrams in quantum Schubert cells \cite{CM}. The set of Cauchon diagrams has a natural poset structure under inclusion; likewise the set of $H$-primes can be viewed as a poset under inclusion. 

In this broader context, we are able to show in fact that the following result holds.
\begin{thm}
\label{intro-increasing}
If $w$ and $w'$ are two Cauchon diagrams of a CGL extension $R$ with $w \subsetneq w'$, then $J_{w} \subsetneq J_{w'}$, where $J_{w}$ and $J_{w'}$ denote respectively the (unique) $H$-primes associated to $w$ and $w'$. 
\end{thm}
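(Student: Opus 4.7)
The plan is to apply Cauchon's deleting derivations procedure and track how inclusions of $H$-primes behave through each deletion step. Denote the canonical generators of $R$ (in the CGL ordering) by $y_1,\ldots,y_N$ and by $F$ the skew-field of fractions of $R$. Cauchon's algorithm produces a sequence of subalgebras $R = R^{(N+1)}, R^{(N)}, \ldots, R^{(2)} = \overline{R}$ of $F$, where $\overline{R}$ is a quantum affine space with generators $t_1,\ldots,t_N$, together with a canonical bijection $\sigma_j \colon H\text{-}\spec(R^{(j+1)}) \to H\text{-}\spec(R^{(j)})$ at each step. The composition $\sigma = \sigma_2 \circ \cdots \circ \sigma_N$ gives a bijection between $H\text{-}\spec(R)$ and the set of $H$-primes of $\overline{R}$ of the form $K_w = \langle t_i : i \in w\rangle$ where $w$ is a Cauchon diagram; by construction, $J_w = \sigma^{-1}(K_w)$.

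Since $\overline{R}$ is a quantum affine space, the $H$-primes $K_w$ satisfy $K_w \subsetneq K_{w'}$ if and only if $w \subsetneq w'$. Hence the theorem reduces to the following step lemma: each $\sigma_j$ is \emph{inclusion-reflecting}, meaning that $\sigma_j(P) \subseteq \sigma_j(Q)$ implies $P \subseteq Q$.

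To prove the step lemma, I would use the explicit description of $\sigma_j$. The algebras $R^{(j)}$ and $R^{(j+1)}$ share a common Ore localization $\widetilde{R}_j$ obtained by inverting $y_j$ (whose images in $R^{(j)}$ and $R^{(j+1)}$ coincide inside $F$), and $\sigma_j$ is built from the usual extension-contraction bijection between primes of $R^{(j)}$ and $R^{(j+1)}$ not meeting the multiplicative set generated by $y_j$, together with a canonical isomorphism $R^{(j+1)}/\langle y_j\rangle \cong R^{(j)}/\langle y_j\rangle$ handling primes that contain $y_j$. Distinguishing cases based on whether $y_j$ lies in $\sigma_j(P)$ and in $\sigma_j(Q)$, the two pure cases (both or neither contain $y_j$) are immediate from preservation of inclusions under localization/contraction and under the quotient isomorphism. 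The mixed case $y_j \in \sigma_j(Q),\ y_j \notin \sigma_j(P)$ is handled by extending $P$ to $\widetilde{R}_j$, identifying this extension with $\sigma_j(P)\widetilde{R}_j \subseteq \sigma_j(Q)\widetilde{R}_j$, and contracting back to $R^{(j+1)}$ to obtain an ideal sitting inside $Q$.

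Iterating the step lemma along the tower then shows that $\sigma$ itself reflects inclusions; combined with the trivial observation for quantum affine spaces, this proves the theorem. The main obstacle will be the mixed case of the step lemma, which requires a careful reconciliation of the two pieces making up $\sigma_j$ — in particular verifying that the contraction of $\sigma_j(P)\widetilde{R}_j$ back to $R^{(j+1)}$ really lands inside $Q$ rather than in some strictly larger $H$-prime, using the fact that $Q$ is the \emph{unique} $H$-prime of $R^{(j+1)}$ mapped to $\sigma_j(Q)$ by $\sigma_j$.
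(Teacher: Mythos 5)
Your overall architecture coincides with the paper's: the paper proves the theorem by induction along the deleting-derivations tower, setting $J_w^{(j)}=\varphi_j\circ\cdots\circ\varphi_N(J_w)$, starting from $K_w\subseteq K_{w'}$ in the quantum affine space $\overline{R}$, and showing at each step that $\varphi_j$ reflects inclusions. Your two ``pure'' cases (both primes contain $X_j$, or neither does) are disposed of exactly as in the paper, via the increasing homeomorphisms that $\varphi_j$ induces on the two pieces of $\spec(R^{(j+1)})$.

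The gap is in the mixed case, which is the entire content of the theorem. There you have $X_j^{(j)}\in\varphi_j(Q)$ and $X_j^{(j)}\notin\varphi_j(P)$, and you propose to extend both $\varphi_j(P)$ and $\varphi_j(Q)$ to the common localization $\widetilde{R}_j=R^{(j)}S_j^{-1}=R^{(j+1)}S_j^{-1}$ and contract back. But $\varphi_j(Q)$ contains the element $X_j^{(j)}$ that generates the denominator set $S_j$, so $\varphi_j(Q)\widetilde{R}_j$ is the unit ideal: localization destroys all information about $Q$, and contracting $\varphi_j(P)\widetilde{R}_j=PS_j^{-1}$ back to $R^{(j+1)}$ merely recovers $P$ itself. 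The uniqueness of $Q$ as the $H$-prime over $\varphi_j(Q)$ gives you no purchase on the containment $P\subseteq Q$. The paper's argument here is necessarily element-wise: given $z=\sum_t a_tZ_j^t\in P$ with $a_t$ in the subalgebra $A$ generated by the $Z_i$, $i\neq j$, it uses the explicit homomorphism $\theta(a)=\sum_k\frac{(1-q_j)^{-k}}{[k]!_{q_j}}\delta^k\sigma^{-k}(a)Z_j^{-k}$ and the local nilpotency of $\delta_j$ to rewrite $zZ_j^m$ as an element of $R^{(j)}\cap PS_j^{-1}=\varphi_j(P)\subseteq\varphi_j(Q)$; it then invokes the fact that an $H$-prime of $A'[Z_j;\sigma]$ contains each $Z_j$-coefficient of any of its elements (\cite[Corollary 2.4]{llr}) to extract the degree-$m$ coefficient $\sum_t\mu_{t,t}\theta(\delta^t\sigma^{-t}(a_t))$, pushes it through $g_j$ into $Q$, and finally uses $\delta(a)=Z_ja-\sigma(a)Z_j\in Q$ (valid because $Z_j\in Q$) to strip the argument down to $a_0\in Q$, whence $z\in Q$. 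None of this is recoverable from a localization/contraction formalism, so your sketch as written does not close.
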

Note that Theorem \ref{intro-increasing1} is a special case of this result.

The outline of this paper is as follows.  In \S 2 we give the necessary background on CGL extensions and we prove Theorem \ref{intro-increasing}.  In \S 3, we give a formula for the dimension 
of a stratum in a uniparameter CGL extension. Then we use this formula 
to compute the dimension of the $(0)$-stratum in a quantum Schubert cell. In \S 4 we describe the results obtained in the previous sections in the particular case of quantum matrices. Then we use these results in order to prove Theorems \ref{intro-bound} and \ref{intro2}.  In \S 5, after having seen the possible values that can occur as the dimension of an $\hc$-stratum in quantum matrices, we give a conjecture about the number of $d$-dimensional $\hc$-strata in $\oqmmn$.

Throughout this paper, we use the following conventions. 
\begin{enumerate}
\item[$(i)$]
 If
$I$ is a finite set, $|I|$ denotes its cardinality.  
\item[$(ii)$]  $\gc a,b \dc := \{ i\in{\mathbb N} \mid a\leq i\leq b\}$. 
\item[$(iii)$] $\mathbb{K}$ denotes a field and we set
$\mathbb{K}^*:=\mathbb{K}\setminus \{0\}$.  
\item[$(iv)$] If $A$ is a $\mathbb{K}$-algebra, then $\spec(A)$ and  $\prim(A)$ denote respectively its prime and primitive spectra.
\end{enumerate}

\section{$H$-primes in CGL extensions}
\label{CGL}

In this section, we recall the notion of CGL extensions that was introduced in \cite{llr}. 
Examples include various quantum algebras in the generic case such as quantum affine spaces, quantum matrices, 
the positive part of quantized enveloping algebras of semisimple complex Lie algebras, etc. As we will see, the advantage of this class of algebras is that one can use both the stratification theory of Goodearl and Letzter and the theory of deleting derivations of Cauchon in order to study their prime and primitive spectra. This will allow us to investigate the topology of the $H$-spectrum of such algebras and prove Theorem \ref{intro-increasing}. 

\subsection{$H$-stratification theory of Goodearl and Letzter, and CGL extensions}
\label{sectionCGL}
 
\vskip 2mm

Throughout this subsection, $N$ denotes a positive integer and
 $R$ is an iterated Ore extension; that is,
\begin{equation}
R\ = \ \C[X_1][X_2;\sigma_2,\delta_2]\cdots[X_N;\sigma_N,\delta_N],
\end{equation}
 where $\sigma_j$ is an automorphism of the $\C$-algebra $R_{j-1}:=\C[X_1][X_2;\sigma_2,\delta_2]\dots[X_{j-1};\sigma_{j-1},\delta_{j-1}]$
 and  $\delta_j$ is a  $\C$-linear $\sigma_j$-derivation of
 $R_{j-1}$ for all $j\in \gc 2 ,N \dc$. In other words, $R$ is a skew polynomial ring whose multiplication is defined by:
 $$X_j a = \sigma_j(a) X_j + \delta_j(a)$$
 for all $j\in \gc 2 ,N \dc$ and $a \in R_{j-1}$. Thus $R$ is a noetherian domain.  Henceforth, we
 assume that, in the terminology of \cite{llr}, $R$ is a CGL extension.

\begin{defn}
\label{defCGL}
{\em 
The iterated Ore extension $R$ is said to be a \emph{CGL extension} if 
\begin{enumerate}
\label{hypofond}
\item For all $j \in \gc 2,N \dc$, $\delta_j$ is locally nilpotent;
\item For all $j \in \gc 2,N \dc$, there exists  $q_j \in \C^*$ such
  that $\sigma_j \circ \delta_j = q_j \delta_j \circ \sigma_j$ 
and, for all $i \in \gc 1,j-1 \dc$, there exists $\lambda_{j,i} \in
  \C^*$ such that $\sigma_j(X_i)=\lambda_{j,i} X_i$;
\item None of the $q_j$ ($2 \leq j \leq N$) is a root of unity;
\item There exists a torus $H=(\C^*)^d$ that acts rationally by
  $\C$-automorphisms on $R$ such that:
  \begin{itemize}
\item $X_1,\dots,X_N$ are $H$-eigenvectors;
\item The set $\{ \lambda \in \C^* \mbox{ $\mid$ } (\exists h
  \in H)(h\cdot X_1=\lambda X_1)\}$ is infinite;
\item For all $j \in \gc 2,N \dc$, there exists $h_j \in H$ such
  that $h_j\cdot X_i=\lambda_{j,i}X_i$ if $1 \leq i < j$ 
and $h_j\cdot X_j=q_j X_j$.
\end{itemize}
\end{enumerate}
}
\end{defn}

It follows from work of Goodearl and Letzter \cite{gl2} that every $H$-prime ideal of $R$ is completely prime, so $H$-$\spec(R)$ coincides with the set of $H$-invariant completely prime ideals of $R$. Moreover there are at most $2^N$ $H$-prime ideals in $R$. As a corollary, the $H$-stratification (see (\ref{eq:Hstratification})) breaks down the prime spectrum
of $R$ into a finite number of parts, the $H$-strata. The geometric
nature of the $H$-strata is well known: each $H$-stratum is
homeomorphic to the scheme of irreducible subvarieties of a $\C$-torus \cite[Theorems II.2.13 and II.6.4]{bgbook}. However the dimension of these schemes are unknown in general.

\subsection{Quantum affine spaces}
\vskip 2mm

We now recall an important subclass of CGL extensions, namely quantum affine spaces. 

Let $N$ be a positive integer and let $\Lambda=\left( \Lambda_{i,j}
\right) \in M_N (\C^*)$ be a multiplicatively antisymmetric
matrix; that is, $\Lambda_{i,j} \Lambda_{j,i}=\Lambda_{i,i}=1$
 for all  $i,j \in \gc 1,N \dc$. The quantum affine space associated
 to $\Lambda$ is denoted by  $\mathcal{O}_{\Lambda}(\C^N)=
\C_{\Lambda}[T_1,\dots,T_N]$; this is the $\C$-algebra generated by
  $N$ indeterminates $T_1,\dots,T_N$ subject to the relations $T_j T_i
  = \Lambda_{j,i} T_i T_j$ for all $i,j \in \gc 1,N \dc$. It is
  well known that $\mathcal{O}_{\Lambda}(\C^N)$ is an iterated Ore
  extension that we can write:
$$\mathcal{O}_{\Lambda}(\C^N)=\C [T_1][T_2;\sigma_2]\cdots[T_N;\sigma_N],$$
where $\sigma_j$ is the automorphism defined by
$\sigma_j(T_i)=\Lambda_{j,i} T_i$ for all $1\leq i < j \leq
N$. Observe that the torus $H=\left( \C ^* \right) ^N$ acts by
automorphisms on $\mathcal{O}_{\Lambda}(\C^N)$ 
via:
$$(a_1,\dots,a_N)\cdot T_i=a_iT_i \mbox{ for all }i\in \gc 1,N \dc \mbox{ and }(a_1,\dots,a_N) \in H.$$
 Moreover, it is well known (see for instance \cite[Corollary 3.8]{llr}) that
 $\mathcal{O}_{\Lambda}(\C^N)$ is a CGL extension 
 with this action of $H$. Hence $\mathcal{O}_{\Lambda}(\C^N)$ has at
 most $2^N$  $H$-prime ideals and they are all completely prime.

The $H$-stratification of 
$\spec\left( \mathcal{O}_{\Lambda}(\C^N) \right)$ has been entirely
described by Brown and Goodearl when the group $\langle \Lambda_{i,j}
\rangle$ is torsion free  \cite{bgtrans} and next by Goodearl and Letzter
in the general case \cite{gl1}. We now recall their results.

Let $W$ denote the set of subsets of $\gc 1,N \dc$. If $w \in W$,
then we denote by $K_w$ the (two-sided) ideal of
$\mathcal{O}_{\Lambda}(\C^N)$ generated by the indeterminates $T_i$
with $i\in w$. It is easy to check that $K_w$ is an $H$-invariant completely prime ideal of 
$\mathcal{O}_{\Lambda}(\C^N)$.  

\begin{prop}\cite[Proposition 2.11]{gl1}
\label{proprappelHstratificationespacesaffinesquantiques}
\begin{enumerate} 
\item The ideals $K_w$ with $w\in W$ are exactly the $H$-prime ideals
  of $\mathcal{O}_{\Lambda}(\C^N)$.  Hence there are exactly $2^N$
  $H$-prime ideals in this case;
\item For all  $w\in W$, the
  $H$-stratum associated to $K_w$ is given by
$$\spec_{K_w}\left( \mathcal{O}_{\Lambda}(\C^N) \right) =\left\{ P \in \spec \left(\mathcal{O}_{\Lambda}(\C^N) \right) ~|~ P \cap \{T_i | i\in \gc 1,N \dc \}= \{ T_i ~|~ i\in w\}\right\} .$$
\end{enumerate}
\end{prop}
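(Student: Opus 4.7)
The plan is to handle part (1) in two steps---first that each $K_w$ is an $H$-prime, then that every $H$-prime arises this way---and to deduce part (2) from (1) by a short calculation. For the first step, each $K_w$ is generated by the $H$-eigenvectors $\{T_i:i\in w\}$ (hence $H$-invariant), and the quotient $\mathcal{O}_\Lambda(\C^N)/K_w$ is canonically isomorphic to the quantum affine space on the remaining generators $\{T_i:i\notin w\}$, which is a noetherian domain; so $K_w$ is completely prime and in particular $H$-prime. For the second step, I would fix an $H$-prime $J$, set $w:=\{i\in\gc 1,N\dc:T_i\in J\}$ so that $K_w\subseteq J$, and pass to $\overline R:=\mathcal{O}_\Lambda(\C^N)/K_w$ in order to show that the image $\overline J$ of $J$ is zero. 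The central tool is the $H$-weight decomposition of $\overline R$: because the rational action of $H=(\C^*)^N$ scales each $T_i$ by an independent parameter, the monomials $T^\alpha$ in $\overline R$ are weight vectors with pairwise distinct weights, and consequently any $H$-stable subspace of $\overline R$ is spanned by the monomials it contains. If $\overline J$ were nonzero it would therefore contain a monomial $T^\alpha$ with $\alpha\neq 0$, and then complete primality of $H$-primes in a CGL extension (recalled in Section~\ref{sectionCGL}) would force some $T_i$ with $i\notin w$ to lie in $\overline J$, contradicting the choice of $w$. Hence $\overline J=(0)$ and $J=K_w$.

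To deduce part~(2), I would fix $P\in\spec(\mathcal{O}_\Lambda(\C^N))$ and set $w_P:=\{i:T_i\in P\}$. Since $K_{w_P}$ is an $H$-invariant ideal contained in $P$, the containment $K_{w_P}\subseteq (P:H)$ is automatic. Conversely, $(P:H)$ is an $H$-prime contained in $P$, so by part~(1) it equals $K_v$ for some $v\in W$; but then the inclusions $T_i\in K_v\subseteq P$ for $i\in v$ force $v\subseteq w_P$, whence $(P:H)=K_v\subseteq K_{w_P}$. This yields $(P:H)=K_{w_P}$ and identifies the stratum $\spec_{K_w}(\mathcal{O}_\Lambda(\C^N))$ with $\{P:w_P=w\}$, which is exactly the set appearing in the statement.

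The principal technical hurdle lies in the weight-space step of part~(1): it relies simultaneously on the rationality of the $H$-action---so that $\overline R$ splits as a direct sum of its weight spaces and every $H$-submodule inherits this decomposition---and on the fact that the weight of a monomial $T^\alpha$ in $\overline R$ determines the exponent vector $\alpha$, which in turn exploits the full $N$-parameter freedom of the torus. Everything else is essentially formal: the definition of the $K_w$, the manipulation of $(P:H)$, and the reduction to $\overline R$ become routine once the weight decomposition is in hand.
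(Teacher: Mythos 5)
The paper does not prove this proposition at all: it is quoted verbatim from Goodearl--Letzter (\cite[Proposition 2.11]{gl1}), so there is no in-paper argument to compare against. Your proof is correct and is essentially the standard argument from that reference. The two load-bearing points are exactly the ones you identify: (i) rationality of the torus action gives the weight-space decomposition of any $H$-stable subspace, and since the monomials $T^\alpha$ have pairwise distinct $H$-weights (which uses that $\mathbb{K}^*$ is infinite, guaranteed here because $q$ is not a root of unity), every $H$-stable subspace is spanned by the monomials it contains; (ii) complete primality of $H$-primes then converts membership of a monomial into membership of one of its variables. Your deduction of part (2) from part (1) via $(P:H)=K_{w_P}$ is also the standard one and is complete. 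Two cosmetic remarks: the passage to the quotient $\overline{R}=\mathcal{O}_\Lambda(\C^N)/K_w$ is harmless but unnecessary --- one can argue directly in $\mathcal{O}_\Lambda(\C^N)$ that the $H$-prime $J$ is spanned by the monomials it contains, each of which is divisible by some $T_i\in J$, giving $J\subseteq K_w$ at once; and for the count ``exactly $2^N$'' one should note (as is immediate from the monomial basis of $K_w$) that $w\mapsto K_w$ is injective.
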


\subsection{The canonical  partition of ${\rm Spec}(R)$}
\label{canonicalembedding}
\vskip 2mm

In this subsection, $R$ denotes a CGL extension as in Section \ref{sectionCGL}. We present the canonical partition of ${\rm Spec}(R)$
that was constructed by Cauchon \cite{c1}. This partition gives new
insights into the $H$-stratification of ${\rm Spec}(R)$.

In order to describe the prime spectrum of $R$, Cauchon \cite[Section 3.2]{c1} has
constructed an algorithm called the \emph{deleting derivations algorithm}. This algorithm constructs, for each $j\in \gc N+1, 2 \dc$, a family 
$\{X_1^{(j)},\dots,X_N^{(j)}\}$ of elements of the division ring of fractions ${\rm Fract}(R)$ of $R$ defined as follows:

\begin{enumerate}
\item When $j=N+1$, we set $(X_1^{(N+1)},\dots,X_N^{(N+1)})=(X_1,\dots,X_N)$.
\item Assume that $j<N+1$ and that the $X_i^{(j+1)}$ ($i \in \gc 1,N \dc$) are already constructed. 
Then it follows from \cite[Th\'eor\`eme 3.2.1]{c1} that $X_j^{(j+1)} \neq 0$ and that, for each $i\in \gc 1,N \dc$, we have
$$X_i^{(j)}=\left\{ 
\begin{array}{ll}
X_i^{(j+1)} & \quad \mbox{ if }i \geq j \\
\displaystyle{\sum_{k=0}^{+\infty } \frac{(1-q_j)^{-k}}{[k]!_{q_j}} }
\delta_j^k \circ \sigma_j^{-k} (X_i^{(j+1)}) (X_j^{(j+1)})^{-k} & \quad
\mbox{ if }i < j,
\end{array} \right. $$
\end{enumerate}
where $[k]!_{q_j}=[0]_{q_j} \times \dots \times [k]_{q_j}$ with 
$[0]_{q_j}=1$ and $[i]_{q_j}=1+q_j+\dots+q_j^{i-1}$ when $i \geq 1$.

For all $j \in \gc 2,N+1 \dc$, we denote by $R^{(j)}$ the subalgebra of ${\rm Fract}(R)$ generated by the $X_i^{(j)}$; that is,
$$R^{(j)}:= \mathbb{K} \langle X_1^{(j)},\dots,X_N^{(j)} \rangle .$$

The following results were proved by Cauchon \cite[Th\'eor\`eme 3.2.1 and Lemme 4.2.1]{c1}. 

For $j \in \gc 2,N+1 \dc$, we have

\begin{enumerate}
\item $R^{(j)}$ is isomorphic to an iterated Ore extension of the form $$\C [Y_1]\dots[Y_{j-1};\sigma_{j-1},\delta_{j-1}][Y_j;\tau_j]\cdots[Y_N;\tau_N]$$ 
by an isomorphism that sends $X_i^{(j)}$ to $Y_i$ ($1 \leq i \leq N$), where 
$\tau_j,\dots,\tau_N$ denote the $\mathbb{K}$-linear automorphisms  such that 
$\tau_{\ell}(Y_i)=\lambda_{\ell,i} Y_i$ ($1 \leq i \leq \ell$).
\item Assume that $j \neq N+1$ and set $S_j:=\{(X_j^{(j+1)})^n \mbox{| } n\in \mathbb{N} \}=
\{(X_j^{(j)})^n \mbox{ | } n\in \mathbb{N} \}$.
\\This is a multiplicative system of regular elements of $R^{(j)}$ and $R^{(j+1)}$, that satisfies the Ore condition
in $R^{(j)}$ and $R^{(j+1)}$. Moreover we have 
$$R^{(j)}S_j^{-1}=R^{(j+1)}S_j^{-1}.$$
\end{enumerate}

It follows from these results that, for all $j\in \gc 2,N+1 \dc$, $R^{(j)}$ is a noetherian domain.

As in \cite{c1}, we use the following notation.

\begin{notn}
We set $\overline{R}:=R^{(2)}$ and $T_i:=X_i^{(2)}$ for all $i\in \gc 1,N \dc$.
\end{notn}

It follows from \cite[Proposition 3.2.1]{c1} that  $\overline{R}$ is a quantum affine space in the indeterminates $T_1,\dots,T_N$--it is for this reason that Cauchon used the expression ``effacement des d\'erivations". More precisely, let $\Lambda=\left( \mu_{i,j} \right) \in M_N(\C^*)$ be the
multiplicatively antisymmetric matrix whose entries are defined as follows. 
$$\mu_{j,i}=\left\{ \begin{array}{ll}
\lambda_{j,i} & \mbox{ if }i<j \\
1 & \mbox{ if } i=j \\
\lambda_{i,j}^{-1} & \mbox{ if }i> j,
\end{array}\right.$$
where the  $\lambda_{j,i}$ with $i<j$ come from the CGL extension structure of $R$ (see Definition \ref{defCGL}). Then we have \begin{equation}\overline{R}=\mathbb{K}_\Lambda [T_1,\dots,T_N]=\mathcal{O}_{\Lambda}(\C^N). \end{equation}

The deleting derivations algorithm was used by Cauchon in order to relate the prime spectrum of a CGL extension $R$ to the prime spectrum of the associated quantum affine space $\overline{R}$. More precisely, Cauchon  has used this algorithm to construct embeddings \begin{equation}
\varphi_j:{\rm Spec}(R^{(j+1)}) \longrightarrow {\rm Spec}(R^{(j)}) \qquad {\rm for ~} j \in \gc 2,N \dc.
\end{equation} Recall from \cite[Section 4.3]{c1} that these embeddings are defined as follows.

Let $P \in  {\rm Spec}(R^{(j+1)})$. Then 
$$ \varphi_j (P) = \left\{\begin{array}{ll} 
PS_j^{-1} \cap R^{(j)} & \mbox{ if } X_j^{(j+1)} \notin P \\
g_j^{-1} \left( P/(X_j^{(j+1)}) \right) & \mbox{ if } X_j^{(j+1)} \in P \\
\end{array}\right.$$
where $g_j$ denotes the surjective homomorphism 
$$g_j:R^{(j)}\rightarrow R^{(j+1)}/(X_j^{(j+1)})$$ defined by $$g_j(X_i^{(j)}):=X_i^{(j+1)} + (X_j^{(j+1)}).$$ (For more details see \cite[Lemme 4.3.2]{c1}.)  It was proved by Cauchon \cite[Proposition 4.3.1]{c1} that $\varphi_j$ induces an increasing homeomorphism from the topological space $$\{P \in  {\rm Spec}(R^{(j+1)}) \mid X_j^{(j+1)} \notin P \}$$ onto $$\{Q \in  {\rm Spec}(R^{(j)}) \mid X_j^{(j)} \notin Q \}$$ whose inverse is also an increasing homeomorphism; also, $\varphi_j$ induces an increasing homeomorphism from $$\{P \in  {\rm Spec}(R^{(j+1)}) \mid X_j^{(j+1)} \in P \}$$ onto its image by $\varphi_j$ whose inverse similarly is an increasing homeomorphism. Note however that, in general, $\varphi_j$ is not an homeomorphism from ${\rm Spec}(R^{(j+1)})$ onto its image.

Composing these embeddings, we get an embedding \begin{equation} 
\varphi:=\varphi_2 \circ \dots \circ \varphi_N : 
{\rm Spec}(R) \longrightarrow {\rm Spec}(\overline{R}), \end{equation} which is called the  \emph{canonical embedding} from ${\rm Spec}(R)$ into 
${\rm Spec}(\overline{R})$. This canonical embedding allows the construction of a partition of ${\rm Spec}(R)$ as follows.
 
We keep the notation of the previous sections. In particular, $W$ still denotes the set of all subsets of $\gc 1,N \dc$. If $w \in W$, then we
set 
$$  \spec_{w}\left(R \right):=\varphi^{-1} \left(  \spec_{K_w}\left( \overline{R} \right) 
\right).$$
Moreover, we denote by $W'$ the set of those $w \in W$ such that $
\spec_{w}\left(R \right) \neq \emptyset$. The elements of $W'$ are called the {\em Cauchon diagrams} of the CGL extension $R$. Then it follows from the work of Cauchon
\cite[Proposition 4.4.1]{c1} that 
$${\rm Spec}(R) =\bigsqcup_{w \in W'} \spec_w (R) \mbox{ and }~| W'|~ \leq~| W |~ = ~2^N.$$

This partition is called the \emph{canonical partition} of ${\rm Spec}(R)$; this
  gives another way to understand the $H$-stratification, as Cauchon
  has shown \cite[Th\'eor\`eme 5.5.2]{c1} that these two partitions
  coincide. As a consequence, he has given another description of
  the $H$-prime ideals of $R$.

\begin{prop}\cite[Lemme 5.5.8 and Th\'eor\`eme 5.5.2]{c1}
\label{descriHprem}
$ $
\begin{enumerate}
\item Let $w \in W'$. There exists a (unique) $H$-invariant (completely) prime ideal $J_w$ of
  $R$ such that $\varphi(J_w)=K_w$, where $K_w$ denotes the ideal of $\overline{R}$ generated by the $T_{i}$  with $i \in w$.
\item $ H \mbox{-}{\rm Spec}(R) = \{J_w \mbox{ $\mid$ }w\in W'\}$. 
\item $\spec_{J_w}(R)=\spec_w(R)$ for all $w \in W'$.
\end{enumerate}
\end{prop}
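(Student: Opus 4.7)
The plan is to exploit the compatibility of the deleting derivations algorithm with the torus action and then translate the description of $H$-primes in the quantum affine space $\overline{R}$ (Proposition \ref{proprappelHstratificationespacesaffinesquantiques}) back to $R$ along the canonical embedding $\varphi$.

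First I would show by decreasing induction on $j$, from $N+1$ down to $2$, that every $X_i^{(j)}$ is an $H$-eigenvector inside $\mathrm{Fract}(R)$. The base case $j=N+1$ is given. For the inductive step, the defining formula
\[
X_i^{(j)} \;=\; \sum_{k\geq 0} \frac{(1-q_j)^{-k}}{[k]!_{q_j}}\,\delta_j^k \sigma_j^{-k}(X_i^{(j+1)})\,(X_j^{(j+1)})^{-k}
\]
is a sum of terms whose $H$-weights coincide: the CGL compatibility conditions $\sigma_j\circ\delta_j=q_j\delta_j\circ\sigma_j$ together with the existence of the element $h_j\in H$ (which realises $\sigma_j$ on the lower-indexed generators and $q_j\cdot$ on $X_j$) force $\delta_j$ and $\sigma_j$ to shift $H$-weights in a uniform way, so each summand has the same $H$-weight as $X_i^{(j+1)}$. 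It follows that each $R^{(j)}$ is $H$-stable and inherits a rational torus action.

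Second, I would check that every embedding $\varphi_j$ is $H$-equivariant, i.e.\ $\varphi_j(h\cdot P)=h\cdot\varphi_j(P)$ for $P\in\spec(R^{(j+1)})$ and $h\in H$. This is immediate from the two defining formulas: in the localised case, $S_j$ is generated by the $H$-eigenvector $X_j^{(j+1)}$, so the Ore localisation and contraction to $R^{(j)}$ commute with the $H$-action; in the quotient case, $g_j$ is $H$-equivariant because it sends the $H$-eigenvector $X_i^{(j)}$ to $X_i^{(j+1)}+(X_j^{(j+1)})$. Composing, $\varphi=\varphi_2\circ\cdots\circ\varphi_N$ is $H$-equivariant, so $\varphi$ sends $H$-invariant primes to $H$-invariant primes. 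Moreover, a direct check shows that for any prime $P$ of $R^{(j+1)}$, $X_j^{(j)}\in\varphi_j(P)$ if and only if $X_j^{(j+1)}\in P$, so the Cauchon diagram of $P$ is determined by $\varphi(P)$ alone.

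For part (1) and (2), let $w\in W'$ and pick any $P_0\in\spec_w(R)$; set $J_w:=(P_0:H)$, which is an $H$-prime of $R$. By the $H$-equivariance of $\varphi$, each $h\cdot P_0$ satisfies $\varphi(h\cdot P_0)=h\cdot K_w=K_w$, i.e.\ the entire $H$-orbit of $P_0$ lies in $\spec_w(R)$. Tracking the algorithm level by level, the ``membership'' data $X_i^{(j)}\in \varphi_{j}\cdots\varphi_N(Q)$ is stable under both enlarging $Q$ to $P_0$ and shrinking $Q$ to $(P_0:H)$ within the $H$-orbit; hence $\varphi(J_w)=K_w$, so $J_w\in\spec_w(R)$. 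Conversely, if $J$ is any $H$-prime of $R$ then $\varphi(J)$ is $H$-invariant and prime in $\overline{R}$, hence equals some $K_w$ by Proposition \ref{proprappelHstratificationespacesaffinesquantiques}(1), forcing $w\in W'$ and $J\in\spec_w(R)$. Injectivity of $\varphi$ then gives uniqueness of $J_w$, yielding the bijection $W'\longleftrightarrow H\text{-}\spec(R)$, $w\mapsto J_w$.

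Finally, for part (3), I would argue that $\spec_w(R)\subseteq \spec_{J_w}(R)$: given $P\in\spec_w(R)$, the $H$-prime $(P:H)$ lies in $\spec_w(R)$ by the same orbit argument, and uniqueness forces $(P:H)=J_w$, i.e.\ $P\in\spec_{J_w}(R)$. Since $\{\spec_w(R)\}_{w\in W'}$ and $\{\spec_{J_w}(R)\}_{w\in W'}$ are both partitions of $\spec(R)$ indexed by the same set $W'$, the inclusion $\spec_w(R)\subseteq\spec_{J_w}(R)$ for each $w$ forces equality. The main obstacle I anticipate is the careful bookkeeping at step two: one must verify that the Cauchon diagram is truly an invariant of the $H$-orbit of a prime (equivalently, that $\varphi((P:H))$ and $\varphi(P)$ determine the same $w$), and this relies on propagating the eigenvector/localisation interplay coherently through every step of the algorithm.
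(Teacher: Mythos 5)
The paper offers no proof of this proposition: it is quoted verbatim from Cauchon \cite[Lemme 5.5.8 et Th\'eor\`eme 5.5.2]{c1}, so there is nothing in the text to compare your argument against line by line. Your outline does follow the standard route of Cauchon's proof (propagating the $H$-eigenvector property through the deleting derivations algorithm, $H$-equivariance of each $\varphi_j$, and transfer of Proposition \ref{proprappelHstratificationespacesaffinesquantiques} along $\varphi$), and the equivariance and ``$X_j^{(j)}\in\varphi_j(P)\Leftrightarrow X_j^{(j+1)}\in P$'' steps are sound.

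There is, however, a genuine gap at the heart of parts (1) and (3), precisely at the step you flag as ``tracking the algorithm level by level.'' You define $J_w:=(P_0:H)=\bigcap_{h\in H}h\cdot P_0$ and assert $\varphi(J_w)=K_w$ because the whole orbit of $P_0$ lies in $\spec_w(R)$. But $(P_0:H)$ is an \emph{intersection} of members of the orbit, not a member of it, and $\varphi$ is defined prime-by-prime through alternating localisations and quotients; it is not clear a priori that $\varphi$ commutes with this intersection, nor even that $(P_0:H)$ lies in $\spec_w(R)$ (which is exactly what part (3) claims, so one must avoid circularity). Moreover, equivariance alone only gives that $\varphi$ of an $H$-invariant prime lands in the $H$-stratum of some $K_{w'}$; to get a prime mapping \emph{exactly} onto $K_w$ you need surjectivity of $\varphi|_{\spec_w(R)}$ onto $\spec_{K_w}(\overline{R})$. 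The clean repair is to invoke Cauchon's Th\'eor\`eme 5.1.1 (quoted later in the paper, in the proof of Theorem \ref{dimstratumGen}): $\varphi$ restricts to an increasing homeomorphism, with increasing inverse, from $\spec_w(R)$ onto $\spec_{K_w}(\overline{R})$. Then $J_w:=\bigl(\varphi|_{\spec_w(R)}\bigr)^{-1}(K_w)$ exists and is the unique minimal element of $\spec_w(R)$; equivariance plus uniqueness gives $h\cdot J_w=J_w$; and for any $P\in\spec_w(R)$ one has $J_w=(J_w:H)\subseteq(P:H)\subseteq P$ with $(P:H)$ an $H$-prime, which combined with part (2) and minimality identifies $(P:H)=J_w$ and yields (3) without ever applying $\varphi$ to an intersection. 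As written, your argument asserts rather than proves the key commutation, so the existence claim in (1) and the inclusion $\spec_w(R)\subseteq\spec_{J_w}(R)$ in (3) are not yet established.
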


\subsection{The map $w \mapsto J_w$ is increasing}

In this section, we prove Theorem \ref{intro-increasing}.

\begin{thm} \label{increasing} Let $R$ be a CGL extension and let $w,w' \in W'$ be two Cauchon diagrams of $R$. If $w \subseteq w'$ then $J_w \subseteq J_{w'}$. 
\end{thm}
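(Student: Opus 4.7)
The plan is to prove, by induction on $j$ running from $j = 2$ up to $j = N+1$, that $J_w^{(j)} \subseteq J_{w'}^{(j)}$, where $J_w^{(j)}$ denotes the unique $H$-prime of $R^{(j)}$ associated to the Cauchon diagram $w$ via the deleting derivations algorithm (so $J_w^{(2)} = K_w$, $J_w^{(N+1)} = J_w$, and $\varphi_j(J_w^{(j+1)}) = J_w^{(j)}$). The base case $j = 2$ is immediate: by Proposition~\ref{proprappelHstratificationespacesaffinesquantiques} one has $K_w \subseteq K_{w'}$ in the quantum affine space $\overline R$ whenever $w \subseteq w'$.

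For the inductive step, I would first establish the characterization $k \in w \iff X_k^{(k+1)} \in J_w^{(k+1)}$, by tracking membership of $X_k$ through each step $\varphi_\ell$ (with $\ell \leq k$), using the two formulas defining $\varphi_\ell$ and the fact that $X_k^{(\ell)} = X_k^{(\ell+1)}$ when $\ell \leq k$. Combined with $w \subseteq w'$, only three configurations at level $j$ can occur: (i) both $J_w^{(j+1)}$ and $J_{w'}^{(j+1)}$ contain $X_j^{(j+1)}$; (ii) neither does; or (iii) only $J_{w'}^{(j+1)}$ does. In cases (i) and (ii), both $H$-primes lie in a common piece of $\spec(R^{(j+1)})$ on which $\varphi_j$ restricts to an increasing homeomorphism with increasing inverse by \cite[Proposition 4.3.1]{c1}, so the inclusion $J_w^{(j)} \subseteq J_{w'}^{(j)}$ lifts automatically.

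The main obstacle is case (iii), where the two lifts are built by genuinely different constructions: $J_w^{(j+1)} = J_w^{(j)} S_j^{-1} \cap R^{(j+1)}$ via localization and contraction, whereas $J_{w'}^{(j+1)} = \pi^{-1}(g_j(J_{w'}^{(j)}))$ via quotient and preimage, where $\pi: R^{(j+1)} \to R^{(j+1)}/(X_j^{(j+1)})$. Since $(X_j^{(j+1)}) \subseteq J_{w'}^{(j+1)}$, the desired inclusion is equivalent to $\pi(J_w^{(j+1)}) \subseteq g_j(J_{w'}^{(j)})$ in $R^{(j+1)}/(X_j^{(j+1)})$, and by the inductive hypothesis together with the surjectivity of $g_j$ it suffices to show the stronger containment $\pi(J_w^{(j+1)}) \subseteq g_j(J_w^{(j)})$. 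I would establish this last claim by taking $a \in J_w^{(j+1)}$, using that $a \cdot (X_j^{(j+1)})^n \in J_w^{(j)}$ for some $n \geq 0$ in the common localization $R^{(j+1)}S_j^{-1} = R^{(j)}S_j^{-1}$, and combining the key identity $g_j(X_i^{(j)}) = \pi(X_i^{(j+1)})$ with the ring isomorphism $R^{(j+1)}/(X_j^{(j+1)}) \cong R^{(j)}/(X_j^{(j)})$ induced by $g_j$ to compare the two polynomial expressions of the same element modulo $X_j^{(j+1)}$. The delicacy lies precisely here: since $\varphi_j$ is not globally monotone (as the $2 \times 2$ quantum matrices example from the introduction demonstrates), a simple monotonicity argument cannot be invoked, and one must exploit the detailed interplay between the localization lift and the quotient lift under the identification provided by $g_j$.
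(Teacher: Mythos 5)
Your overall architecture matches the paper's proof exactly: the same induction on $j$ proving $J_w^{(j)}\subseteq J_{w'}^{(j)}$, the same base case in the quantum affine space $\overline{R}$, the same three-way case split, and the same use of the two increasing homeomorphisms from \cite[Proposition 4.3.1]{c1} to dispose of the first two cases. Your reduction of the remaining case to the single containment $\pi(J_w^{(j+1)})\subseteq g_j(J_w^{(j)})$ is correct and is, in substance, exactly what the paper establishes.

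However, your sketch of how to prove that containment has a genuine gap. Knowing that $z(X_j^{(j+1)})^n\in J_w^{(j)}$ for some $n\ge 1$ and then ``comparing the two polynomial expressions of the same element modulo $X_j^{(j+1)}$'' yields nothing: the image of $z(X_j^{(j+1)})^n$ in $R^{(j+1)}/(X_j^{(j+1)})$ is $0$ as soon as $n\ge 1$, so reduction modulo $X_j^{(j+1)}$ destroys all information about $\pi(z)$. The paper needs two further ingredients that your proposal does not supply. First, writing $R^{(j+1)}=A[Z_j;\sigma,\delta]$ with $Z_i=X_i^{(j+1)}$, it uses the homomorphism $\theta(a)=\sum_k \frac{(1-q_j)^{-k}}{[k]!_{q_j}}\,\delta^k\circ\sigma^{-k}(a)Z_j^{-k}$ (the explicit inverse of the deleting-derivations step, satisfying $\theta(Z_i)=X_i^{(j)}$) together with the local nilpotency of $\delta$ to rewrite $z=\sum_t a_tZ_j^t$ so that $zZ_j^m$ becomes an honest polynomial in $Z_j$ with coefficients in $A'=\theta(A)$, where $R^{(j)}=A'[Z_j;\sigma]$. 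Second --- and this is the decisive step --- since $J_w^{(j)}$ is an $H$-prime of the skew polynomial ring $A'[Z_j;\sigma]$, \cite[Corollary 2.4]{llr} guarantees that each individual coefficient of $zZ_j^m$ lies in $J_w^{(j)}$; one then extracts the coefficient of $Z_j^{m}$ (not of $Z_j^{0}$), namely $\sum_t\mu_{t,t}\theta(\delta^t\circ\sigma^{-t}(a_t))$, whose image under $g_j$ equals $\pi(a_0)=\pi(z)$ precisely because $\delta(a)=Z_ja-\sigma(a)Z_j$ vanishes modulo $(Z_j)$. Without this coefficient-extraction lemma there is no way to isolate the one coefficient that survives the passage to the quotient, and your argument does not close.
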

\begin{proof}
For $j \in \gc 2,  N+1\dc$ and $P \in {\rm Spec}(R)$, we set $P^{(j)}:= \varphi_j \circ \dots \circ \varphi_N (P)$. \\ 
We prove by induction on $j$ that $$J_w^{(j)} \subseteq J_{w'}^{(j)}\qquad {\rm for~} j \in \gc 2,  N+1\dc .$$

When $j=2$, we have $$J_w^{(2)}=K_w ~~~{\rm and}~~~ J_{w'}^{(2)}=K_{w'}.$$ As $w \subseteq w'$, we have $K_w \subseteq K_{w'}$, so 
$$J_w^{(2)} \subseteq J_{w'}^{(2)},$$ as desired.

We assume $j \leq N$ and $J_w^{
(j)} \subseteq J_{w'}^{(j)}$. We need to prove that $J_w^{(j+1)} \subseteq J_{w'}^{(j+1)}$. 
Observe first that if $$X_j^{(j)} \notin  J_{w'}^{(j)},$$ then $$J_w^{(j)},~J_{w'}^{(j)} \in \{Q \in  {\rm Spec}(R^{(j)}) \mid X_j^{(j)} \notin Q \}.$$ As $\varphi_j$ induces an increasing homeomorphism, still denoted $\varphi_j$, from $$\{P \in  {\rm Spec}(R^{(j+1)}) \mid X_j^{(j+1)} \notin P \}$$ onto $$\{Q \in  {\rm Spec}(R^{(j)}) \mid X_j^{(j)} \notin Q \}$$ whose inverse is also an increasing homeomorphism, we obtain
$$J_w^{(j+1)}=\varphi_j^{-1}(J_w^{(j)}) \subseteq \varphi_j^{-1}(J_{w'}^{(j)})=J_{w'}^{(j+1)},$$ as desired. 

Similarly, if $X_j^{(j)} \in  J_{w}^{(j)}$, then both $J_w^{(j)}$ and $J_{w'}^{(j)}$ belong to $$\varphi_j \left( \{P \in  {\rm Spec}(R^{(j+1)}) \mid X_j^{(j+1)} \in P \} \right).$$ As $\varphi_j$ induces an increasing homeomorphism, still denoted $\varphi_j$, from $$\{P \in  {\rm Spec}(R^{(j+1)}) \mid X_j^{(j+1)} \in P \}$$ onto its image whose inverse is also an increasing homeomorphism, we get 
$$J_w^{(j+1)}=\varphi_j^{-1}(J_w^{(j)}) \subseteq \varphi_j^{-1}(J_{w'}^{(j)})=J_{w'}^{(j+1)},$$ as desired. 

It only remains to deal with the case where $X_j^{(j)} \in  J_{w'}^{(j)}$ and $X_j^{(j)} \notin  J_{w}^{(j)}$, so 
$$J_{w}^{(j)}=J_{w}^{(j+1)}S_j^{-1} \cap R^{(j)} , \ J_{w}^{(j+1)}=J_{w}^{(j)}S_j^{-1} \cap R^{(j+1)} \ \mbox{and} \ J_{w'}^{(j)}=g_j^{-1} \left( J_{w'}^{(j+1)}/(X_j^{(j+1)}) \right) .$$

In order to simplify the notation we set:
\begin{equation}
Y_i:=X_i^{(j)} \mbox{ and } Z_i:=X_i^{(j+1)}.
\end{equation}
Also we let $A$ denote the subalgebra of $R^{(j+1)}$ generated by the $Z_i$ with $i \neq j$;
that, is
\begin{equation} A := \mathbb{K}\langle Z_1,\ldots ,Z_{j-1},Z_{j+1},\ldots ,Z_N\rangle .
\end{equation} 
Observe that $R^{(j+1)}=A[Z_j; \sigma , \delta]$ where $\sigma$ denotes the automorphism of $A$ defined by $\sigma(Z_i) =\lambda_{j,i}Z_i=\sigma_j(Z_i)$ if $i < j$ and $\sigma(Z_i) =\lambda_{i,j}^{-1}Z_i$ otherwise;
and where $\delta$ denotes the $\sigma$-derivation of $A$ defined by $\delta (Z_i)= \delta_j (Z_i)$ if $i <j$ and $\delta (Z_i)= 0$ otherwise. One can easily check that this Ore extension satisfies the conditions 
of \cite[Section 2]{c1}, so that the map $\theta: A \rightarrow R^{(j+1)}S_j^{-1}=R^{(j)}S_j^{-1}$ defined by 
$$\theta (a) = \sum_{k=0}^{+\infty } \frac{(1-q_j)^{-k}}{[k]!_{q_j}} \delta^k \circ \sigma^{-k} (a) Z_j^{-k} \ \ {\rm for ~}a \in A$$
is an homomorphism. Observe that by the definition of the deleting derivations algorithm we have $\theta (Z_i)=Y_i$ for all $i \neq j$. 
Hence $\theta (A) \subset R^{(j)}$ and $R^{(j)}$ is the subalgebra of 
$R^{(j+1)}S_j^{-1}$ generated by $\theta (A)$ and $Z_j=Y_j$.   

In order to simplify the notation, we set \begin{equation}
P:=J_{w}^{(j+1)},~~ P':=J_{w}^{(j)},~~ Q =J_{w'}^{(j+1)},~~{\rm and}~~ Q':=J_{w'}^{(j)}.
\end{equation} 
Let $z \in P$ with $z \neq 0$. We need to prove that $z \in Q$. First we can write
$$z= \sum_{t=0}^d a_t Z_j^t,$$
where $a_t \in A$ and $a_d \neq 0$; moreover, this expression for $z$ in this form is unique.
If $a_0 =0$, then $z \in (Z_j) \subseteq Q$. So we assume that $a_0 \neq 0$. 

For every $t$, there exists $k_t$ minimal such that $\delta^{k_t+1}(a_t)=0$ (recall that $\delta$ is locally nilpotent), so 
$$\theta (a_t)=\sum_{k=0}^{k_t } \frac{(1-q_j)^{-k}}{[k]!_{q_j}} \delta^k \circ \sigma^{-k} (a_t) Z_j^{-k} . $$
By induction on the degree of local nilpotency we get: 
$$a_t=\theta (a_t)+\sum_{k=1}^{k_t } \mu_{k,t} \theta(\delta^k \circ \sigma^{-k} (a_t)) Z_j^{-k}=\sum_{k=0}^{k_t } \mu_{k,t} \theta(\delta^k \circ \sigma^{-k} (a_t)) Z_j^{-k},  $$
where $\mu_{0,t}=1$ and $\mu_{k,t} \in \mathbb{K}$. 
Let $m$ be the maximum of the $k_t-t$. 
Then $$zZ_j^m = \sum_{t=0}^d a_t Z_j^{t+m}=\sum_{t=0}^d \sum_{k=0}^{k_t } \mu_{k,t}\theta(\delta^k \circ \sigma^{-k} (a_t)) Z_j^{t+m-k} \in R^{(j)}.$$ 
Thus $zZ_j^m \in R^{(j)} \cap PS_j^{-1}=P'$. Hence 
\begin{equation}
\label{zz}
zZ_j^m =\sum_{t=0}^d \sum_{k=0}^{k_t } \mu_{k,t} \theta(\delta^k \circ \sigma^{-k} (a_t)) Z_j^{t+m-k} \in P'.
\end{equation}
We let $A'$ denote the subalgebra of $R^{(j)}$ generated by $Y_i$ with $i \neq j$, or equivalently, the image of $\theta$. 
As $P'$ is an $H$-prime ideal of $R^{(j)}=A'[Z_j;\sigma]$ it follows from \cite[Corollary 2.4]{llr} that the coefficient of $Z_j^{\ell}$ in the previous sum belongs to $P'$ for every nonnegative integer $\ell$. In particular, the coefficient of degree $m$ is in $P'$. Hence, by setting $k=t$ in equation (\ref{zz}), we obtain
$$\sum_{t=0}^d \mu_{t,t} \theta(\delta^t \circ \sigma^{-t} (a_t)) \in P'\subseteq Q'.$$
As $Q'=g_j^{-1} (Q/(Z_j))$ and $(Z_j) \subseteq Q$, we get that  
\begin{eqnarray}
\label{eq}
\sum_{t=0}^d \mu_{t,t} \delta^t \circ \sigma^{-t} (a_t) \in  Q.
\end{eqnarray}
As $Z_j \in Q$, we see $\delta(a) = Z_ja - \sigma(a) Z_j \in Q$ for every $a \in A$. Hence we deduce from (\ref{eq}) that 
$a_0=\mu_{0,0}a_0 \in Q$. As $(Z_j) \subseteq Q$, we see that 
$$z= a_0+ \left( \sum_{t=1}^d a_t Z_j^{t-1} \right)Z_j \in Q,$$ as desired.
\end{proof}


\section{Dimension of $H$-strata of uniparameter CGL extensions}

In this section, we obtain a formula for the dimension of a stratum of a uniparameter CGL extension, and apply it to compute the dimension of the $(0)$-stratum of a quantum Schubert cell.\\ 

\subsection{Uniparameter CGL extensions}

In this section, we assume that $R$ is a {\it uniparameter CGL extension}, that is, $R$ is a CGL extension such that there exist an antisymmetric matrix $(a_{i,j}) \in \mathcal{M}_N(\mathbb{Z})$ and $q \in \C^*$ not a root of unity  such that $\lambda_{j,i}=q^{a_{j,i}}$ for all $1 \leq i < j \leq N$.

\subsection{Dimension of $H$-strata of uniparameter CGL extensions}

The aim of this section is to give a formula for the dimension of the $H$-stratum in $R$ associated to a Cauchon diagram $w \in W'$. We need to introduce the following definition.

\begin{defn} {\em
Let $w \in W'$ be a Cauchon diagram of $R$. Let $\{\ell_1< \cdots < \ell_d\}:=\gc 1,N \dc \setminus w$ be the complement of $w$. We define the \emph{skew-adjacency matrix}, $M_R(w)$, of $w$ to be the $d\times d$ matrix whose $(i,j)$ entry is $a_{\ell_i \ell_j}$.
} \end{defn}

\begin{thm}\label{dimstratumGen}
Let $w \in W'$. The $H$-stratum associated to $J_w$ is homeomorphic to the prime spectrum of a commutative Laurent polynomial ring over $\mathbb{K}$ in 
$\dim_{\mathbb{Q}} (\ker (M_R(w)))$ indeterminates. 
\end{thm}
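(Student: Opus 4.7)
The plan is to chain together three homeomorphisms. First, Proposition \ref{descriHprem} together with the refinement of \cite[Proposition 4.3.1]{c1} recalled just before it shows that each $\varphi_j$ is a homeomorphism when restricted to the locus where $X_j^{(j+1)}$ either lies in, or misses, the prime. Iterating these restrictions through $j=N,N-1,\dots,2$, choosing at each step the piece corresponding to whether $j\in w$ or $j\notin w$, yields a homeomorphism
\[
\varphi:\spec_{J_w}(R)\ \stackrel{\sim}{\longrightarrow}\ \spec_{K_w}(\overline{R}),
\]
so it suffices to compute the stratum $\spec_{K_w}(\overline{R})$ inside the quantum affine space $\overline{R}=\mathcal{O}_\Lambda(\mathbb{K}^N)$.

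Second, I identify this stratum with the spectrum of a quantum torus. The quotient $\overline{R}/K_w$ is itself a quantum affine space $\mathbb{K}_{\Lambda'}[T_{\ell_1},\dots,T_{\ell_d}]$, where $d=N-|w|$ and $\Lambda'$ is the submatrix of $\Lambda$ indexed by the complement of $w$. By Proposition \ref{proprappelHstratificationespacesaffinesquantiques}, the primes of $\overline{R}$ in the stratum correspond under the quotient to primes of $\overline{R}/K_w$ which miss every generator $T_{\ell_i}$; standard localization theory then gives a homeomorphism from that set onto $\spec(T)$, where
\[
T\ :=\ \mathbb{K}_{\Lambda'}[T_{\ell_1}^{\pm 1},\dots,T_{\ell_d}^{\pm 1}]
\]
is the quantum torus obtained by inverting all the remaining generators.

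Third, I pass from $\spec(T)$ to $\spec(Z(T))$. Every $H$-eigenvector in $T$ is a scalar multiple of a monomial and hence a unit, so $(0)$ is the only $H$-prime of $T$; the Goodearl--Letzter theorem (\cite[Theorems II.2.13 and II.6.4]{bgbook}) therefore provides a homeomorphism $\spec(T)\cong\spec(Z(T))$ via contraction to the center. It remains to identify $Z(T)$. In the uniparameter case $\Lambda'_{i,j}=q^{a_{\ell_i,\ell_j}}$, a Laurent monomial $T_{\ell_1}^{n_1}\cdots T_{\ell_d}^{n_d}$ is central if and only if $q^{\sum_j a_{\ell_i,\ell_j}n_j}=1$ for every $i$; because $q$ is not a root of unity, this is equivalent to $M_R(w)\,n=0$. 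Hence the lattice of central monomials is $\ker(M_R(w))\cap\mathbb{Z}^d$, a free abelian group of rank $k:=\dim_{\mathbb{Q}}\ker(M_R(w))$, and choosing a $\mathbb{Z}$-basis identifies $Z(T)$ with $\mathbb{K}[z_1^{\pm 1},\dots,z_k^{\pm 1}]$. Composing the three homeomorphisms yields the theorem.

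The main subtlety lies in the first step: one must verify that iterating the local homeomorphisms of \cite[Proposition 4.3.1]{c1} tracks exactly with the data defining $\spec_w(R)$, so that the result is a genuine homeomorphism of strata rather than merely a bijection. The center computation in the third step is clean in the uniparameter setting precisely because $\sum_j a_{\ell_i,\ell_j}n_j\neq 0$ forces $q^{\sum_j a_{\ell_i,\ell_j}n_j}\neq 1$; this is what makes the $\mathbb{Q}$-kernel of the single integer matrix $M_R(w)$ the correct invariant, and is exactly the point at which a multiparameter argument would require a more delicate logarithmic lattice.
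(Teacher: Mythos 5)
Your proposal is correct and follows essentially the same route as the paper: reduce to the stratum of the quantum affine space $\overline{R}$ via the canonical embedding (the paper cites Cauchon's Th\'eor\`emes 5.1.1 and 5.5.1 directly for the stratum homeomorphism, where you reconstruct it from the local homeomorphisms of Proposition 4.3.1), localize $\overline{R}/K_w$ to the quantum torus, pass to the center by extension/contraction, and identify the lattice of central monomials with $\ker(M_R(w))\cap\mathbb{Z}^d$ using that $q$ is not a root of unity. The only differences are in which references are invoked for the standard intermediate facts, not in the structure of the argument.
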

\begin{proof} Let $w \in W'$ be a Cauchon diagram of $R$. Let $\{\ell_1< \cdots < \ell_d\}:=\gc 1,N \dc \setminus w$ be the complement of $w$.  Recall from the work of Cauchon \cite[Th\'eor\`emes 5.1.1 and 5.5.1]{c1} that the canonical
embedding induces an inclusion-preserving homeomorphism from the $H$-stratum $ \spec_{J_w}(R)$ of $R$ associated to $J_w$ onto the $H$-stratum $\spec_{K_w}\left( \overline{R} \right)$ of $\overline{R}$ associated to $K_w$. Hence we deduce from Proposition \ref{proprappelHstratificationespacesaffinesquantiques} that 
\begin{eqnarray}
\label{eqstratum}
\spec_{J_w}(R) & \simeq & \spec_{K_w}\left( \overline{R} \right) \nonumber \\
& \simeq & \{ P \in \spec(\overline{R}) ~|~ P \cap \{ T_1, \dots , T_N \} = \{T_i ~|~ i \in w\}  \}.
\end{eqnarray}

Recall that  $\overline{R}=\mathbb{K}_{\Lambda}[T_{1},T_{2},\dots,T_{N}]$,
where $\Lambda$ denotes the $N \times N$ matrix whose entries are defined by $\Lambda_{k,l}
=q^{a_{k,l}}$ for all $k,l \in \gc 1, N \dc$. Let $\Lambda_w$ denote the multiplicatively antisymmetric $d \times d$ matrix whose entries are defined by $(\Lambda_w)_{i,j}=q^{M_R(w)_{i,j}}=q^{a_{\ell_i \ell_j}}$. 

As $K_w$ is the prime ideal generated by the indeterminates $T_{i}$ such that $i \in w$, the algebra $\overline{R}/K_w$ is isomorphic to 
the quantum affine space  $\mathbb{K}_{\Lambda_w}[t_1, \dots , t_d ]$ by an isomorphism that sends 
$T_{\ell_i} +K_w $ to $t_i$ and $T_k$ to $0$ if $k \neq \ell_i$. 

To finish the proof, we use the same idea as in \cite[Corollary 1.3]{laulen}. 

We denote by $P(\Lambda_w)$ the quantum torus associated to $\mathbb{K}_{\Lambda_w}[t_1, \dots , t_d ]$; that is, 
$$P(\Lambda_w):= \mathbb{K}_{\Lambda_w}[t_1, \dots , t_d ] \Sigma^{-1},$$
where $\Sigma$ denotes the multiplicative system of 
$\mathbb{K}_{\Lambda_w}[t_1, \dots , t_d ]$ generated by the normal elements $t_1,\ldots ,t_d$.

It follows from (\ref{eqstratum}) that
\begin{eqnarray*}
\spec_{K_w}\left( \overline{R} \right) &\simeq & \spec_{(0)}\left( \mathbb{K}_{\Lambda_w}[t_1, \dots , t_d ] \right) \\
& \simeq & \spec (P(\Lambda_w)).
\end{eqnarray*}
  Next, $\spec(P(\Lambda_w))$ is Zariski-homeomorphic via extension and contraction to the prime spectrum of the centre $Z(P(\Lambda_w))$ of $P(\Lambda_w)$, by
\cite[Corollary 1.5]{gl1}. Further, as we shall see,
$Z(P(\Lambda_w))$ is a Laurent polynomial ring. To make 
this result precise, we need to introduce the following notation.

If $\underline{s}=(s_1,\dots,s_d) \in \mathbb{Z}^{d}$, then
we set $t^{\underline{s}}:= t_1^{s_1}\dots t_d^{s_d} \in P( \Lambda_w)$.
As in \cite{gl1}, we denote by $\sigma : \mathbb{Z}^{d} \times
\mathbb{Z}^{d} \rightarrow \mathbb{K}^*$ the antisymmetric bicharacter
defined by 
$$\sigma(\underline{s},\underline{t}):=\prod_{i,j=1}^d
(\Lambda_w)_{i,j}^{s_it_j}=q^{\sum_{i,j=1}^d
a_{\ell_i,\ell_j} s_i t_j } \: \quad {\rm for~all}\quad  
\underline{s},\underline{t} \in
\mathbb{Z}^{d}.$$ 
Then it follows from \cite[1.3]{gl1} that the centre
$Z(P(\Lambda_w))$ of $P(\Lambda_w)$ is a Laurent polynomial ring over $\mathbb{K}$ in the variables
$(t^{\underline{b_1}})^{\pm 1},\dots,(t^{\underline{b_r}})^{\pm 1}$, where
$(\underline{b_1},\dots,\underline{b_r})$ is any basis of $$V:=\{\underline{s}
\in \mathbb{Z}^{d} \mid \sigma(\underline{s},-)\equiv 1\}.$$ Since $q$ is not
a root of unity, easy computations show that $\underline{s} \in V$ if and only
if $M_R(w)^t\underline{s}^t=0$. Hence the centre $Z(P(\Lambda_w))$ of $P(\Lambda_w)$
is a Laurent polynomial ring in $\dim_{\mathbb{Q}} (\ker(M_R(w)^t))=\dim_{\mathbb{Q}} (\ker(M_R(w)))$ indeterminates. 

To summarize, we have 
$$ \spec_{J_w}(R)  \simeq  \spec_{K_w}\left( \overline{R} \right) 
\simeq \spec \left( P(\Lambda_w) \right) \simeq \spec \left( Z(P(\Lambda_w)) \right),$$
and $ Z(P(\Lambda_w)) $ is a Laurent polynomial ring in $\dim_{\mathbb{Q}} (\ker(M_R(w)))$ indeterminates, as desired. 
\end{proof}

\subsection{Application to quantum Schubert cells.}

To finish this section, we use Theorem \ref{dimstratumGen} in order to compute the dimension of the $(0)$-stratum of quantum Schubert cells. 

Let us first recall the definition of quantum Schubert cells.

Let $\g$ be a simple Lie $\comp$-algebra of rank $n$. We denote by
$\pi=\{\alpha_1,\dots,\alpha_n\}$ the set of simple roots
associated to a triangular decomposition $\g=\mathfrak{n}^- \oplus
\mathfrak{h} \oplus \mathfrak{n}^+$. Recall that $\pi$ is a basis of a 
Euclidean vector space $E$ over $\mathbb{R}$, whose inner product is
denoted by $(\mbox{ },\mbox{ })$ ($E$ is usually denoted by $
\mathfrak{h}_{\mathbb{R}}^*$ in Bourbaki). We denote by  $W$ the
Weyl group of $\g$, that is, the subgroup of the orthogonal group of
$E$ generated by the reflections $s_i:=s_{\alpha_i}$, for $i \in
\{1,\dots,n\}$, with reflecting hyperplanes $H_i:=\{\beta \in E \mid
(\beta,\alpha_i)=0\}$, $i \in \{1,\dots,n\}$. The length of $w \in W$ is denoted 
by $l(w)$. Further, we denote by $w_0$ the longest element
of $W$. Finally, we denote by $A=(a_{ij}) \in M_n(\mathbb{Z})$ the Cartan
 matrix associated to these data. As $\g$ is simple, $a_{ij} \in \{0,-1,-2,-3\}$ for all $i \neq j$. 

Recall that the scalar product of two roots $(\alpha,\beta)$ is always
an integer. We assume that the short roots have length $\sqrt{2}$.

For all $i \in \{1,\dots,n \}$, set
$q_i:=q^{\frac{(\alpha_i,\alpha_i)}{2}}$ and 
$$\left[ \begin{array}{l} m \\ k \end{array} \right]_i:=
\frac{(q_i-q_i^{-1}) \dots
  (q_i^{m-1}-q_i^{1-m})(q_i^m-q_i^{-m})}{(q_i-q_i^{-1})\dots
  (q_i^k-q_i^{-k})(q_i-q_i^{-1})\dots (q_i^{m-k}-q_i^{k-m})} $$
for all integers $0 \leq  k \leq  m$. By convention, 
$$\left[ \begin{array}{l} m \\ 0 \end{array} \right]_i:=1.$$

The quantised enveloping algebra $U_q(\g)$ of $\g$ over $\comp$ associated to
the previous data is the $\mathbb{K}$-algebra generated by the
indeterminates $E_1,\dots,E_n,F_1,\dots , F_n,K_1^{\pm 1}, \dots, K_n^{\pm 1}$ subject to the following relations:
$$K_i K_j =K_j K_i $$
$$ K_i E_j K_i^{-1}=q_i^{a_{ij}}E_j  \mbox{ and }  K_i F_j K_i^{-1}=q_i^{-a_{ij}}F_j$$
$$E_i F_j -F_jE_i=\delta_{ij} \frac{K_i-K_i^{-1}}{q_i-q_i^{-1}} $$
and the quantum Serre relations:
\begin{eqnarray}
\label{Serrequantique} 
\sum_{k=0}^{1-a_{ij}} (-1)^k  \left[ \begin{array}{c} 1-a_{ij} \\ k
 \end{array} \right]_i E_i^{1-a_{ij} -k} E_j E_i^k=0  \mbox{ } (i \neq  j)
\end{eqnarray}
and 
$$\sum_{k=0}^{1-a_{ij}} (-1)^k  \left[ \begin{array}{c} 1-a_{ij} \\ k
 \end{array} \right]_i F_i^{1-a_{ij} -k} F_j F_i^k=0  \mbox{ } (i \neq  j).$$

We refer the reader to \cite{bgbook,jantzen,josephbook} for more details on
this (Hopf) algebra. Further, as usual, we denote by $U_q^+(\g)$ (resp. $U_q^-(\g)$)  the
subalgebra of $U_q(\g)$ generated by $E_1,\dots,E_n$
(resp. $F_1,\dots,F_n$) and by $U^0$ the subalgebra of $U_q(\g)$ generated by 
$K_1^{\pm 1},\dots, K_n^{\pm 1}$.

To each reduced decomposition of the longest element $w_0$ of the Weyl group $W$ of $\g$, Lusztig has associated a PBW basis of $\U$, see for instance \cite[Chapter 37]{lusztigbook}, \cite[Chapter 8]{jantzen} or \cite[I.6.7]{bgbook}. The construction relates to a braid group action by automorphisms on $\U$. We use the convention of \cite[Chapter 8]{jantzen}. In particular, for any $\alpha \in \pi$, we define the braid automorphism $T_{\alpha}$ of the algebra $U_q(\g)$ as in 
\cite[p. 153]{jantzen}. We set $T_i:=T_{\alpha_i}$. It was proved by Lusztig that the automorphisms $T_{i}$ satisfy the braid relations, that is, if $s_is_j$ has order $m$ in $W$, then $$T_iT_jT_i \dots = T_j T_i T_j \dots ,$$
where there are exactly $m$ factors on each side of this equality.

Consider any $w\in W$, and set $t := l(w)$. Let $w= s_{i_1} \circ \cdots \circ s_{i_t}$ $(i_j \in \{1, \dots , n\})$ be a reduced decomposition of $w$. It is well known that
$\beta_1 = \alpha_{i_1}$, $\beta_2 = s_{i_1}(\alpha_{i_2})$, ..., $\beta_t = s_{i_1} \circ \cdots \circ s_{i_{t-1}}(\alpha_{i_t})$ 
are distinct positive roots and that the set $\{\beta_1, ..., \beta_t\}$ does not depend on the chosen reduced expression of $w$. 
Similarily, we define elements $E_{\beta_k}$ of $U_q(\g)$ by
$$E_{\beta_k}:= T_{i_1} \cdots T_{i_{k-1}} (E_{i_k}).$$
Note that the elements $E_{\beta_k}$ depend on the reduced decomposition of $w$. The following well-known results were proved by Lusztig and Levendorskii-Soibelman.

\begin{thm}[See for instance \cite{LevSoi}]
\label{theofond}
$ $
\begin{enumerate}
 \item For all $k \in \{ 1, \dots, t\}$, the element $E_{\beta_k}$ belongs to $\U$.
 \item If $\beta_k=\alpha_i$, then $E_{\beta_k}=E_i$.
 \item For all $1 \leq i < j \leq t$, we have 
 $$E_{\beta_j} E_{\beta_i} -q^{-(\beta_i , \beta_j)} E_{\beta_i} E_{\beta_j}=
 \sum a_{k_{i+1},\dots,k_{j-1}} E_{\beta_{i+1}}^{k_{i+1}} \cdots E_{\beta_{j-1}}^{k_{j-1}},$$
 where each $a_{k_{i+1},\dots,k_{j-1}} $ belongs to $\C$.
\end{enumerate}
\end{thm}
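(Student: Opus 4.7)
The plan is to establish the three assertions in turn using Lusztig's braid automorphisms $T_i$ on $U_q(\g)$, proceeding by induction along the reduced decomposition $w = s_{i_1}\cdots s_{i_t}$. For (1), I would argue by induction on $k$ that $E_{\beta_k} = T_{i_1}\cdots T_{i_{k-1}}(E_{i_k})$ lies in $\U$. The base case $k=1$ gives $E_{\beta_1} = E_{i_1}$. For the inductive step, the crucial fact is that when $\beta$ is a positive root different from the simple root $\alpha_j$, the braid automorphism $T_j$ sends an element of $\U$ of weight $\beta$ to an element of $\U$ of weight $s_j(\beta) > 0$, since $s_j$ permutes the positive roots other than $\alpha_j$. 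Reducedness of the subword $s_{i_1}\cdots s_{i_k}$ ensures that at each stage the simple root being applied is distinct from the current root, so the induction carries through. For (2), if $\beta_k = \alpha_i$, the identity $s_{i_1}\cdots s_{i_{k-1}} s_{i_k} s_{i_{k-1}}\cdots s_{i_1} = s_i$ in $W$ lifts, via the braid relations for the $T_j$, to an algebra identity which, combined with the one-dimensionality of the $\alpha_i$-weight space of $\U$, forces $T_{i_1}\cdots T_{i_{k-1}}(E_{i_k}) = E_i$.

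For (3), I would show that the quantum commutator $C_{i,j} := E_{\beta_j}E_{\beta_i} - q^{-(\beta_i,\beta_j)}E_{\beta_i}E_{\beta_j}$ is a weight vector of weight $\beta_i + \beta_j$ lying in the subalgebra generated by $E_{\beta_{i+1}},\dots,E_{\beta_{j-1}}$. Weight-homogeneity is immediate from part (1) together with the fact that each $T_j$ sends the weight-$\gamma$ piece into the weight-$s_j(\gamma)$ piece. To see that $C_{i,j}$ lies in the prescribed subalgebra, I would proceed by induction on $i$: applying $T_{i_1}^{-1}$ shifts all indices down by one and reduces the assertion to a shorter reduced expression, while the base case $i = 1$ is handled by direct computation using the quantum Serre relations \eqref{Serrequantique}. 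A PBW-type argument, bootstrapped from the commutator relations already established, then forces the expansion of $C_{i,j}$ to involve only monomials in $E_{\beta_{i+1}},\dots,E_{\beta_{j-1}}$, since positive roots decompose non-trivially only as sums of positive roots and the convexity of the root ordering attached to the reduced expression rules out any contribution from indices outside the interval.

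The main obstacle I expect is the coherent handling of part (2). The definition of $E_{\beta_k}$ depends on the chosen reduced expression of $w$, so a priori two different reduced expressions both producing $\beta_k = \alpha_i$ could yield different elements of $\U$, and even for a single reduced expression the normalising scalar relating $T_{i_1}\cdots T_{i_{k-1}}(E_{i_k})$ to $E_i$ must be pinned to $1$ rather than an arbitrary nonzero constant. Resolving this cleanly requires the full strength of Lusztig's theorem that the $T_j$ satisfy the braid relations of $W$, together with the specific normalisation of Lusztig's braid action (for instance, its compatibility with the bar involution), which together underwrite the reduced-expression-independence of the PBW root vectors at simple roots.
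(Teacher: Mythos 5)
The paper offers no proof of Theorem \ref{theofond}: it is quoted as a classical result of Lusztig and Levendorskii--Soibelman, with \cite{LevSoi} (and, in the surrounding text, \cite{lusztigbook} and \cite{jantzen}) given as the references. So the only meaningful comparison is with the standard proofs in that literature, whose overall architecture --- induction along the reduced word via the braid operators $T_i$, weight arguments, one-dimensionality of the simple-root weight spaces, and convexity of the ordering on $\{\beta_1,\dots,\beta_t\}$ --- your sketch does reproduce. Your discussion of (2) correctly isolates the real difficulty (one-dimensionality of the $\alpha_i$-weight space only yields $E_{\beta_k}=cE_i$, and forcing $c=1$ needs the precise normalisation of Lusztig's action together with the braid relations), and your outline of (3) is the Levendorskii--Soibelman argument in skeleton form.

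There is, however, one load-bearing step that is false as you state it. The ``crucial fact'' underpinning your induction for (1) --- that $T_j$ sends \emph{any} element of $\U$ of weight $\beta$ into $\U$ whenever $\beta$ is a positive root different from $\alpha_j$ --- does not hold: positivity of $s_j(\beta)$ controls only the weight of the image, not its membership in $\U$. Already in type $A_2$, the element $E_1E_2$ has weight $\alpha_1+\alpha_2\neq\alpha_1$, yet $T_1(E_1E_2)=T_1(E_1)T_1(E_2)$ with $T_1(E_1)=-F_1K_1$, and a short computation gives $T_1(E_1E_2)=qE_2-q\,T_1(E_2)F_1K_1$, which is not in $\U$. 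The correct statement, and what the cited sources actually prove, is that $T_j$ restricts to an isomorphism between the \emph{proper} subalgebras $\U\cap T_j^{-1}(\U)$ and $\U\cap T_j(\U)$; the induction then requires showing that the intermediate root vectors $T_{i_\ell}\cdots T_{i_{k-1}}(E_{i_k})$ lie in these subalgebras, which in practice is done by using the braid relations to reduce to explicit rank-two computations (see \cite[8.20]{jantzen}). As written, your inductive step does not close, and the same omission affects the base case of your argument for (3), which is likewise not a direct consequence of the quantum Serre relations alone but funnels through the same rank-two analysis.
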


We denote by $U_q[w]$ the subalgebra of $\U$ generated by $E_{\beta_1}, \dots, E_{\beta_t}$. It is well known that $U_q[w]$ does not depend on 
the reduced decomposition of $w$. Moreover, the monomials $E_{\beta_1}^{k_1} \cdots E_{\beta_t}^{k_t}$, with $k_1, \dots, k_t \in \mathbb{N}$, 
 form a linear basis of $U_q[w]$.
As a consequence of this result, $U_q[w]$ can be presented as a skew-polynomial algebra: 
$$U_q[w]= \C [E_{\beta_1}] [E_{\beta_2}; \sigma_2 , \delta_2] \cdots [E_{\beta_t}; \sigma_t , \delta_t],$$
 where each $\sigma_i$ is a linear automorphism and each $\delta_i$ is a $\sigma_i$-derivation of the appropriate 
 subalgebra. In particular, $U_q[w]$ is a noetherian domain and its group of invertible elements is reduced to nonzero elements of the base-field. 
 
It is well known that the torus $\hc:=(\mathbb{K}^*)^n$ acts rationally by automorphisms on $\U$ via:
$$(h_1,\dots, h_n).E_i = h_i E_i \mbox{ for all } i \in \{1,\dots,n\}.$$
(It is easy to check that the quantum Serre relations are preserved by the group $\hc$.) It is also well known that this action of $\hc$ on $\U$ restricts to a rational action of $\hc$ on $U_q[w]$. Observe that $(0)$ is an $\hc$-prime in $U_q[w]$ as this algebra is a domain.

 It was proved by Cauchon \cite[Proposition 6.1.2 and Lemme 6.2.1]{c1} that $U_q[w]$ is a uniparameter CGL extension with the following associated antisymmetric matrix:
$$\left(
\begin{array}{ccccc}
0                   & (\beta_1 , \beta_2) & \cdots & \cdots & (\beta_1 , \beta_t)\\
-(\beta_1 , \beta_2) & 0  & (\beta_2 , \beta_3)&  & (\beta_2 , \beta_t)\\
\vdots & \ddots & \ddots & \ddots & \vdots \\
\vdots & & \ddots & 0 &(\beta_{t-1} , \beta_t) \\
-(\beta_1 , \beta_t) & \dots & \dots & -(\beta_{t-1} , \beta_t)& 0\\
\end{array}
\right).$$
 The kernel of this matrix has been described by De Concini and Procesi \cite[Lemma 10.4 and 10.6]{DP} who proved that the kernel of this matrix identifies with $\ker (id_E +w)$. So we deduce from Theorem \ref{dimstratumGen} the following result.

\begin{prop}
The dimension of the stratum associated to $(0)$ in $U_q[w]$ is 
$\dim \ker (id_E +w)$. 
\end{prop}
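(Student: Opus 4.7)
The plan is to identify the Cauchon diagram associated to the zero ideal, apply Theorem~\ref{dimstratumGen}, and then invoke the cited kernel computation of De~Concini--Procesi.

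First, since $U_q[w]$ is a domain, $(0)$ is an $H$-prime of $U_q[w]$, so by Proposition~\ref{descriHprem} there is a unique Cauchon diagram $w_0 \in W'$ with $J_{w_0} = (0)$. I would show that $w_0 = \emptyset$ by tracking the canonical embedding $\varphi$ step by step through the deleting derivations algorithm: at each step $j$, the ring $R^{(j+1)}$ is a noetherian domain, so $X_j^{(j+1)} \notin (0)$ and hence $\varphi_j((0)) = (0)\,S_j^{-1} \cap R^{(j)} = (0)$. Iterating gives $\varphi((0)) = (0) = K_{\emptyset}$, and by the uniqueness part of Proposition~\ref{descriHprem} this forces $w_0 = \emptyset$.

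Next, with $w_0 = \emptyset$ the complement $\gc 1, t \dc \setminus \emptyset$ is all of $\gc 1, t \dc$ (writing $t = l(w)$), and the skew-adjacency matrix $M := M_{U_q[w]}(\emptyset)$ is precisely the full antisymmetric $t \times t$ matrix $\bigl((\beta_i,\beta_j)\bigr)_{1 \le i, j \le t}$ displayed just before the proposition statement. Theorem~\ref{dimstratumGen} then identifies $\spec_{(0)}(U_q[w])$ with the prime spectrum of a commutative Laurent polynomial ring over $\mathbb{K}$ in $\dim_{\mathbb{Q}} \ker M$ indeterminates, so the Krull dimension of this stratum equals $\dim_{\mathbb{Q}} \ker M$.

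Finally, I would invoke the cited result of De~Concini and Procesi \cite[Lemmas 10.4 and 10.6]{DP}, which identifies $\ker M$, viewed inside $\mathbb{Q}^t \hookrightarrow E$ via $e_i \mapsto \beta_i$, with $\ker(id_E + w)$. Assembling the three steps yields
$$\dim \spec_{(0)}(U_q[w]) \;=\; \dim_{\mathbb{Q}} \ker M \;=\; \dim \ker(id_E + w),$$
as claimed. The proof is essentially an assembly of prior results: the only genuinely substantive input is the De~Concini--Procesi kernel computation, which I would treat as a black box, so the main (and very mild) obstacle is just the verification in Step~1 that the zero ideal corresponds to the empty Cauchon diagram.
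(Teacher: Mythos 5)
Your proposal is correct and follows essentially the same route as the paper: the paper likewise deduces the result directly from Theorem~\ref{dimstratumGen} applied to the full antisymmetric matrix $\bigl((\beta_i,\beta_j)\bigr)$ together with the De~Concini--Procesi identification of its kernel with $\ker(id_E+w)$. The only difference is that you explicitly verify that $(0)$ corresponds to the empty Cauchon diagram, a step the paper leaves implicit (it merely observes that $(0)$ is an $\hc$-prime because $U_q[w]$ is a domain); your verification via $\varphi_j((0))=(0)$ is correct.
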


It follows from \cite[Proposition 2.2.1]{CM} that the algebra $\oqmmn$ of quantum matrices can be presented as a quantum Schubert cell. So one can use the previous proposition in order to retrieve the dimension of the $(0)$-stratum of $\oqmmn$ that was first obtained in \cite{laulen}. 

In the next section, we investigate in more details the dimensions of the $\hc$-strata in $\oqmmn$.

\section{Quantum matrices}

In this section we study the dimensions of the strata occurring in the Goodearl-Letzter stratification of the ring of $m\times n$ quantum matrices and we prove Theorems \ref{intro-bound} and \ref{intro2}.   
Throughout this section, $q$ denotes a nonzero element of $\mathbb{K}$ 
that is not a root of unity.  The ring $R=\oqmmn$ of $m\times n$ quantum matrices  was defined in the beginning of \S \ref{int}. The ring $R$ is known to be a quantum Schubert cell by \cite[Proposition 2.2.1]{CM}, so we know (see the previous section) that $\oqmmn$ is a uniparameter CGL extension. Nevertheless 
we start by explicitly describe the results of the previous sections in this situation.  

\subsection{Quantum matrices as a CGL extension}
\label{QMCGL}
\vskip 2mm

This section serves to show that quantum matrix rings give examples of CGL extensions and that we can therefore draw upon the background given in \S \ref{CGL}.

It is well known that $R=\oqmmn$ can be presented as an iterated Ore extension over
$\mathbb{K}$ with the generators $Y_{\ia}$ given in the beginning of \S \ref{int} adjoined in lexicographic order.
Thus the ring $R$ is a noetherian domain; we denote by $F$ its skew-field of
fractions. Moreover, since $q$ is not a root of unity, it follows from
\cite[Theorem 3.2]{gletpams} that all prime ideals of $R$ are completely
prime.

It is well known that the algebras $\oqmmn$ and $\oqmnm$ are
isomorphic. Hence, all the results that we will prove for $\oqmmn$ will also be valid for 
$\co_q(M_{n,m})$. Because of this, we assume that $n \leq m$.

$\hc:=\left( \mathbb{K}^* \right)^{m+n}$ acts on $R$ by
$\mathbb{K}$-algebra automorphisms via:
$$(a_1,\dots,a_m,b_1,\dots,b_n)\cdot Y_{\ia} = a_i b_\alpha Y_{\ia} \quad {\rm
for~all} \quad \: (\ia)\in \gc 1,m \dc \times \gc 1,n \dc.$$ 
Moreover, as $q$ is not a root of unity, $R$ endowed with this action of $\hc$ is a uniparameter CGL extension (see for instance \cite{llr}). 
Before going any further let us describe the antisymmetric matrix associated to the uniparameter CGL extension $\oqmmn$. First, we set
$$A:=\left(
\begin{array}{ccccc}
 0 & 1 & 1 & \dots & 1 \\
-1 & 0 & 1 & \dots  & 1 \\
\vdots & \ddots &\ddots&\ddots &\vdots \\
-1 & \dots & -1 & 0 & 1  \\
 -1& \dots& \dots & -1 & 0 \\  
\end{array} 
\right) 
\in M_{m}(\mathbb{Z}) \subseteq M_{m}(\mathbb{Q}). $$
Then the antisymmetric matrix associated to the uniparameter CGL extension $\oqmmn$ is the matrix $B$ as follows.
 $$B=(b_{k,l}):=\left( 
\begin{array}{ccccc}
 A & I_m & I_m & \dots & I_m \\
-I_m & A & I_m & \dots  & I_m \\
\vdots & \ddots &\ddots&\ddots &\vdots \\
-I_m & \dots & -I_m & A & I_m  \\
 -I_m& \dots& \dots & -I_m & A \\  
\end{array} 
\right)
\in M_{mn}(\mathbb{Q}), $$
where $I_m$ denotes the identity matrix of $M_m(\mathbb{Q})$.

The fact that $R=\oqmmn$ is a (uniparameter) CGL extension implies in particular that $\hc$-$\spec(R)$ is finite and that every $\hc$-prime is completely prime. Also, as $R=\oqmmn$ is a CGL extension, one can apply the results of Section \ref{CGL} to this algebra. In particular, using the theory of deleting derivations, Cauchon has given a combinatorial description 
of $\hc$-$\spec(R)$. More precisely, in the case of the algebra $R=\oqmmn$, he has described the set $W'$ that appeared in Section \ref{canonicalembedding} as follows.

First, it follows from \cite[Section 2.2]{c2} that the quantum affine space $\overline{R}$ that appears in Section \ref{canonicalembedding} is in this case $\overline{R}=\mathbb{K}_{\Lambda}[T_{1,1},T_{1,2},\dots,T_{m,n}]$, 
where $\Lambda$ denotes the $mn \times mn$ matrix whose entries are defined by $\Lambda_{k,l}
=q^{b_{k,l}}$ for all $k,l \in \gc 1, mn \dc$. Using the canonical embedding (see Section \ref{canonicalembedding}), Cauchon \cite{c2} produced a bijection between $\hc$-$\spec(\oqmmn)$ and the collection $\mathcal{C}_{m,n}$ of $m\times n$ Cauchon diagrams as defined in Definition 1. Roughly speaking, with the notation of previous sections, the set $W'$ of Cauchon diagrams coincides with the set of $m \times n$ Cauchon diagrams of Definition 1. Let us make 
this precise. If $C$ is an $m\times n$ Cauchon diagram, then we denote by $K_C$ the (completely) prime ideal 
of $\overline{R}$ generated by the indeterminates $T_{i,\alpha}$ such that the box in position $(i,\alpha)$ is a black box of $C$. Then, with $\varphi : \spec(R) \rightarrow \spec(\overline{R})$ denoting the canonical embedding, it follows from \cite[Corollaire 3.2.1]{c2} that there exists a unique $\hc$-invariant (completely) prime ideal $J_C$ of $R$ such that $\varphi (J_C)=K_C$; moreover there are no other $\hc$-primes in $\oqmmn$; that is,
$$\hc \mbox{-} \spec (\oqmmn)=\{ J_C~ |~ C \in \mathcal{C}_{m,n} \}.$$
This last equality justifies the terminology ``$m\times n$ Cauchon diagrams'' for the combinatorial objects described in Definition 1. 

In light of this, the containment rule for $m \times n$ Cauchon diagrams given in the Introduction coincides exactly with set-theoretic containment for the more general description of Cauchon diagrams in terms of sets. So, in the case of quantum matrices, Theorem \ref{increasing} can be rephrased as follows.

\begin{thm}
\label{notintro-increasing}
If $C$ and $C'$ are two $m\times n$ Cauchon diagrams with $C \subsetneq C'$, then $J_{C} \subsetneq J_{C'}$, where $J_{C}$ and $J_{C'}$ denote respectively the $\hc$-primes of $\oqmmn$ associated to $C$ and $C'$. 
\end{thm}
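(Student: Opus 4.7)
The plan is to obtain this statement as a direct specialisation of Theorem \ref{increasing}, since we have already verified in the preceding subsection that $\oqmmn$ is a (uniparameter) CGL extension, with the canonical embedding $\varphi\colon \spec(R)\to\spec(\overline{R})$ given by Cauchon's deleting derivations algorithm applied to the lexicographic iterated Ore extension presentation of $R$. The only real work is to match the combinatorial bookkeeping: the $m\times n$ Cauchon diagrams of Definition \ref{cdef} must be identified with the elements of the abstract set $W'\subseteq W$ of Section \ref{canonicalembedding} in a way that transports the containment relation of Definition \ref{cdef} to set-theoretic inclusion.

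First I would make this identification explicit. Under the enumeration of the $mn$ indeterminates $Y_{i,\alpha}$ (and hence of the corresponding quantum-affine-space generators $T_{i,\alpha}$ in $\overline R=\mathbb{K}_\Lambda[T_{1,1},\dots,T_{m,n}]$) by pairs $(i,\alpha)\in\gc 1,m\dc\times\gc 1,n\dc$, each $m\times n$ Cauchon diagram $C$ is nothing but the subset $w_C\subseteq\gc 1,mn\dc$ of positions of its black boxes. By Cauchon's description recalled in Section \ref{QMCGL}, the associated $H$-prime $J_C$ in $R$ is precisely the $H$-prime $J_{w_C}$ produced by Proposition \ref{descriHprem}, and the ideal $K_{w_C}\subseteq\overline R$ equals $K_C=\langle T_{i,\alpha}\mid (i,\alpha)\text{ black in }C\rangle$. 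Containment of $m\times n$ Cauchon diagrams in the sense of the Introduction (every black box of $C$ is also black in $C'$) therefore coincides exactly with $w_C\subseteq w_{C'}$.

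With this dictionary in place, the hypothesis $C\subsetneq C'$ gives $w_C\subsetneq w_{C'}$, and Theorem \ref{increasing} applied to the CGL extension $\oqmmn$ yields $J_C=J_{w_C}\subseteq J_{w_{C'}}=J_{C'}$. It remains to promote this to a strict inclusion. For that I would invoke the injectivity of the canonical embedding on the level of $H$-primes: by Proposition \ref{descriHprem}(1)-(2), each $H$-prime of $R$ is determined by its image under $\varphi$, so $\varphi(J_C)=K_C$ and $\varphi(J_{C'})=K_{C'}$. Since $w_C\neq w_{C'}$, the generating sets of indeterminates of $K_C$ and $K_{C'}$ in the quantum affine space $\overline R$ are distinct, and these monomial $H$-ideals of $\overline R$ are consequently different (as can be read off from Proposition \ref{proprappelHstratificationespacesaffinesquantiques}, which characterises each $K_w$ by its intersection with the set of generators). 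Hence $J_C\neq J_{C'}$, and combined with the inclusion above this yields $J_C\subsetneq J_{C'}$, as required.

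The only potentially subtle step is the last one, namely extracting strictness from the weak form in Theorem \ref{increasing}; but this strictness is essentially built into the deleting derivations machinery, because different Cauchon diagrams index different $H$-primes. No further computation is needed, and there is no genuine obstacle beyond bookkeeping the correspondence between Definition \ref{cdef} and the abstract definition of Cauchon diagrams for CGL extensions.
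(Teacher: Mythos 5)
Your proposal is correct and follows essentially the same route as the paper: the theorem is obtained by identifying each $m\times n$ Cauchon diagram with the subset of $\gc 1,mn \dc$ indexing its black boxes and then specialising Theorem \ref{increasing} to the CGL extension $\oqmmn$. Your explicit argument for strictness, via the injectivity of $w\mapsto J_w$ coming from $\varphi(J_w)=K_w$ and the distinctness of the ideals $K_w$ for distinct $w$, supplies a justification the paper leaves implicit, and it is exactly the right one.
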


\subsection{Dimension of $\hc$-strata}
\label{sectionCauchondiagram}
\vskip 2mm
We now give some results about the dimension of the $\hc$-stratum of an $\hc$-prime of $\oqmmn$ corresponding to an $m\times n$ Cauchon diagram $C$.  We give two related definitions.
\begin{defn}
{\em
A Cauchon diagram $C$ is \emph{labelled} if each white box in $C$ is labelled with a positive integer such that:
\begin{enumerate}
\item{the labels are strictly increasing from left to right along rows;}
\item{if $i<j$ then the label of each white box in row $i$ is strictly less than the label of each white box in row $j$.}
\end{enumerate}
}
\end{defn}
\begin{figure}
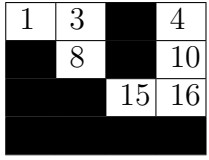

\begin{tabular}{|p{0.30cm}|p{0.30cm}|p{0.30cm}|p{0.30cm}|p{0.30cm}|p{0.30cm}|}

\hline
1 & 3 & \gray & 4 
 \\
\hline
\gray & 8 & \gray & 10 
 \\
\hline
\gray & \gray & 15 & 16  \\
\hline
\gray & \gray & \gray & \gray  \\
\hline
\end{tabular}
\caption{An example of a $4\times 4$ labelled Cauchon diagram.}
\label{lab-figure}
\end{figure}

\begin{defn} {\em
Let $C$ be an $m\times n$ labelled Cauchon diagram with $d$ white boxes and labels $\ell_1< \cdots < \ell_d$. We define the \emph{skew-adjacency matrix}, $M(C)$, of $C$ to be the $d\times d$ matrix whose $(i,j)$ entry is:
\begin{enumerate}
\item 1 if the box labelled $\ell_i$ is strictly to the left and in the same row as the box labelled $\ell_j$ or is strictly above and in the same column as the box labelled $\ell_j$;
\item $-1$ if the box labelled $\ell_i$ is strictly to the right and in the same row as the box labelled $\ell_j$ or is strictly below and in the same column as the box labelled $\ell_j$;
\item 0 otherwise.
\end{enumerate}
} \end{defn}

Observe that $M(C)$ is independent of the set of labels which appear in $C$. 

See, for example, Figure 4.
\begin{figure}[h] \label{fig: 3}
\vskip 2mm
$C:~~$ \begin{tabular}{|p{0.30cm}|p{0.30cm}|p{0.30cm}|p{0.30cm}|}

\hline
$1$ & \gray & \gray &\gray 
 \\
\hline
\gray& \gray & $2$ & \gray 
 \\
\hline
$3$& \gray & $4$ & $5$  
\\
\hline
\gray & \gray & \gray & \gray \\
\hline
\end{tabular}\hskip 3mm $\mapsto$ \hskip 3mm $M(C) = \left( \begin{array}{rrrrr} 0&0&1&0&0\\ 0&0&0&1&0 \\ -1&0&0&1&1 \\ 0&-1&-1&0&1 \\0&0&-1&-1&0\\
\end{array} \right)$
\caption{A labelled Cauchon diagram $C$ and its corresponding skew-adjacency matrix $M(C)$.}
\end{figure}

As a particular case of Theorem \ref{dimstratumGen}, we get the following result for the uniparameter CGL extension $\oqmmn$.

\begin{prop}\label{dimstratum}
Let $C$ be an $m \times n $ Cauchon diagram. The $\hc$-stratum associated to $J_C$ is homeomorphic to the prime spectrum of a commutative Laurent polynomial ring over $\mathbb{K}$ in 
$\dim_{\mathbb{Q}} (\ker (M(C)))$ indeterminates. 
\end{prop}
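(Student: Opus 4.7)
The plan is to derive this proposition as a direct specialization of the general result, Theorem \ref{dimstratumGen}, once the skew-adjacency matrix $M(C)$ defined combinatorially above is identified with the matrix $M_R(w)$ coming from the uniparameter CGL structure.

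First I would recall from Section \ref{QMCGL} that $R=\oqmmn$ is a uniparameter CGL extension whose associated antisymmetric integer matrix $B=(b_{k,l})$ is given explicitly. The generators $Y_{i,\alpha}$ are indexed (in the order in which they are adjoined) by pairs $(i,\alpha)\in\gc 1,m\dc\times\gc 1,n\dc$. Under Cauchon's bijection recalled in Section \ref{QMCGL}, an $m\times n$ Cauchon diagram $C$ corresponds to the element $w\in W'$ consisting of the indices of the black boxes, so that $\gc 1,mn\dc\setminus w$ is naturally in bijection with the white boxes of $C$. Labelling the white boxes by $\ell_1<\dots<\ell_d$ in the fixed order used to adjoin the generators makes this a labelled Cauchon diagram in the sense of the definition above, since the order agrees with reading the grid row by row.

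Next I would identify $M_R(w)$ with $M(C)$ entry by entry. By inspection of the four defining relations of $\oqmmn$ stated at the start of \S\ref{int}, the commutation scalar $\lambda_{k,l}$ between two generators is $q^{\pm 1}$ when the corresponding boxes lie in the same row or in the same column, and equals $1$ otherwise (the only nontrivial cross-row/cross-column relation is a pure derivation term, contributing nothing to the $\sigma$-part). Hence $b_{k,l}\in\{-1,0,1\}$, being nonzero exactly when boxes $k$ and $l$ share a row or column, with the sign dictated by the adjunction order. Because this order increases along rows and then down the grid, for $\ell_i<\ell_j$ one checks that $b_{\ell_i,\ell_j}=+1$ precisely when box $\ell_i$ is strictly left of, or strictly above, box $\ell_j$, and $b_{\ell_i,\ell_j}=0$ when the two boxes share neither a row nor a column. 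Antisymmetry handles the case $\ell_i>\ell_j$, yielding exactly the rule defining $M(C)_{i,j}$. Therefore $M_R(w)=M(C)$.

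Finally, Theorem \ref{dimstratumGen} applied to the uniparameter CGL extension $\oqmmn$ gives that $\spec_{J_w}(\oqmmn)$ is homeomorphic to the prime spectrum of a commutative Laurent polynomial ring in $\dim_{\mathbb{Q}}(\ker M_R(w))$ variables, and substituting $M_R(w)=M(C)$ completes the proof. The only real obstacle is the bookkeeping in the second step: making sure that the global adjunction order on the $mn$ generators matches (up to a harmless relabelling that does not affect the kernel dimension) the left-to-right, top-to-bottom labelling used to define $M(C)$, so that the sign conventions in the two matrices coincide rather than differ by a transpose or a sign.
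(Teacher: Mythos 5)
Your proposal is correct and follows exactly the route the paper takes: Proposition \ref{dimstratum} is stated there as an immediate specialization of Theorem \ref{dimstratumGen} to the uniparameter CGL extension $\oqmmn$, with the identification $M_R(w)=M(C)$ left implicit (it is encoded in the explicit matrix $B$ given in \S\ref{QMCGL}). Your entry-by-entry verification of that identification from the defining relations is the only content beyond what the paper writes, and it checks out.
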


We now use this result in order to prove the first part of Theorem \ref{intro-bound}. In order to achieve this aim, we need the following lemma.

\begin{lem}\label{lem: 1}
Let $C$ be an $m\times n$ labelled Cauchon diagram with $n\le m$ and with $d$ white boxes with labels $\gc 1,d\dc$. Assume that $C$ has no all black columns.  For $1\le j\le n$, let $a_j$ denote the smallest label which appears in column $j$ of $C$. Then there is a $d\times d$ lower triangular matrix $S$ such that the matrix obtained by deleting columns $a_1,\ldots ,a_n$ and rows $a_1,\ldots ,a_n$ from $S\cdot M(C)$ is invertible.
\end{lem}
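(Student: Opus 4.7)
The plan is to construct the lower-triangular matrix $S$ explicitly and then verify invertibility of the restricted matrix directly. Let $N := \gc 1, d \dc \setminus \{a_1, \dots, a_n\}$ denote the labels of the \emph{non-top} white boxes of $C$, and for a white box $b$ write $c(b)$ and $r(b)$ for its column and row in $C$. Because the labels of a labelled Cauchon diagram are strictly increasing both along each row and down each column, $a_{c(b)} < b$ for every $b \in N$, so $S := I_d + \sum_{b \in N} E_{b, a_{c(b)}}$ is a lower-triangular unipotent matrix (hence invertible); its effect is to replace row $b$ of $M(C)$ by the sum of rows $b$ and $a_{c(b)}$ of $M(C)$, leaving the rows indexed by $\{a_1,\dots,a_n\}$ unchanged.

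A direct case analysis of the entries of $(SM(C))_{N,N}$ then yields: the diagonal is identically $1$ (since $M(C)_{a_{c(b)}, b} = +1$); for $b \ne b'$ with $c(b) = c(b')$, the entry equals $0$ if $b' < b$ (above) and $2$ if $b' > b$ (below), via cancellation or reinforcement between $M(C)_{b, b'}$ and $M(C)_{a_{c(b)}, b'}$; and for $c(b) \ne c(b')$, the entry is $\pm 1$ precisely when $r(b) = r(b')$ or $r(b') = r(a_{c(b)})$ (the sign being that of $c(b') - c(b)$) and $0$ otherwise. Ordering rows and columns of $(SM(C))_{N,N}$ by the row of $C$ in which each non-top box lies yields a block-structured matrix: the diagonal block for row $i$ of $C$ has $1$'s on the diagonal, $+1$'s strictly above and $-1$'s strictly below, and a short row reduction shows it is invertible; the super-diagonal blocks contain only $2$'s at positions pairing non-top boxes in the same column; and the sub-diagonal blocks contain only the $\pm 1$'s produced by the cross-row mechanism $r(b') = r(a_{c(b)})$.

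Invertibility of the full restricted matrix would then be established by an iterated block Schur complement, using the invertibility of the diagonal blocks to clear the sub-diagonal blocks in succession. The main obstacle I anticipate is that clearing a sub-diagonal block perturbs the diagonal blocks further down by a term of the form $B_{i', i}\,D_i^{-1}\,B_{i, j}$, and one must verify that each such perturbation preserves invertibility of the updated diagonal block; this relies on the sparse structure of the $B$-blocks, which in turn is dictated by the Cauchon condition (the strict inequality $r(a_{c(b)}) < r(b)$ and the constraints on which columns can have their topmost white box in a given row). In the cases where the above choice of $S$ still yields a singular restricted matrix (a phenomenon visible on small examples), the construction must be augmented by further lower-triangular elementary operations $E_{b, a_{j'}}$ with $j' \ne c(b)$, chosen to break the residual linear dependences; identifying these augmentations uniformly is where the combinatorial content of the proof is concentrated, and the argument closes once the Cauchon property is used to show that the dependences that must be corrected are themselves forced to be of a controllable shape compatible with lower-triangular row operations.
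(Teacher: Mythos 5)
There is a genuine gap, and it stems from a single sign choice. You set $S = I_d + \sum_{b\in N} E_{b,\,a_{c(b)}}$, i.e.\ you \emph{add} the top-of-column row $a_{c(b)}$ to row $b$. As your own entry analysis shows, this leaves nonzero entries ($2$'s at same-column pairs) \emph{above} the block diagonal and $\pm1$'s \emph{below} it, so the restricted matrix is not block triangular in either direction, and you are forced into the iterated Schur-complement argument. You then explicitly concede that you cannot control the perturbations $B_{i',i}D_i^{-1}B_{i,j}$ of the lower diagonal blocks, and that on small examples your $S$ can still produce a singular restricted matrix requiring further, unspecified row operations. That unresolved step is exactly the content of the lemma; as written, the proof does not close.

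The paper's proof takes $S_{b,\,a_{c(b)}} = -1$, i.e.\ it \emph{subtracts} the top-of-column row. With this sign the key computation becomes $(S\cdot M(C))_{b,b'} = M(C)_{b,b'} - M(C)_{a_{c(b)},b'}$, and for $b<b'$ not in the same row this is $1-1=0$ when $b'$ shares a column with $b$ and $0-0=0$ otherwise (the second case uses that $b' > b$ forces $r(b') \geq r(b) > r(a_{c(b)})$, so $a_{c(b)}$ and $b'$ share neither row nor column). Hence after deleting rows and columns $a_1,\dots,a_n$ the matrix is genuinely block \emph{lower}-triangular, and the sub-diagonal blocks you were worried about are irrelevant. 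The same computation shows each diagonal block equals $-I + D_i'$ with $D_i'$ real skew-symmetric, and since a real skew-symmetric matrix has purely imaginary nonzero eigenvalues, $-I + D_i'$ has no zero eigenvalue. Invertibility of the whole restricted matrix follows immediately, with no Schur complements and no augmentation of $S$. I would encourage you to rerun your entry-by-entry case analysis with the opposite sign: everything you computed is reusable, and the difficulty you located in the sub- and super-diagonal blocks disappears.
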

\begin{proof} We let $d_i$ denote the number of white boxes in the $i$th row of $C$.  Then
$$d=d_1+\cdots +d_m$$ and we can write $M(C)$ in block form as
\begin{equation}
\label{MC}
 M(C) \ = \ \left(\begin{array}{cccc} A_1 & J_{1,2} & \cdots & J_{1,m} \\
-J_{1,2}^{\rm T} & A_2 & \cdots & J_{2,m} \\
\vdots & \vdots & \ddots & \vdots \\
-J_{1,m}^{\rm T} & -J_{2,m}^{\rm T} & \cdots & A_m \end{array} \right), 
\end{equation}

where $A_i$ is the $d_i \times d_i$ matrix whose diagonal entries are zero, the entries above the diagonal are $1$, and the entries below the diagonal are $-1$; and $J_{i,j}$ is the $d_i\times d_j$ $0,1$-matrix whose $(k,\ell)$ entry is $1$ if the $k$th white element in row $i$ of $C$ (looking from left to right) is directly above the $\ell$th white element in row $j$ of $C$ (again, looking left to right) and is $0$ otherwise.  

We now define
 a $d\times d$ lower-triangular matrix $S$, whose diagonal entries are all $1$ and for $i>j$, the $(i,j)$ entry is $-1$ if the white box labelled $i$ is in the $k$th column of $C$ and $j=a_k$; and the $(i,j)$ entry is $0$ otherwise.
We now consider the product $S\cdot M(C)$.  
\vskip 2mm
\noindent 
{\bf Claim:} If $i<j$ and the boxes labelled $i$ and $j$ are not in the same row of $C$ and $i\not\in \{a_1,\ldots ,a_n\}$ then the $(i,j)$ entry of $S\cdot M(C)$ is zero.
\vskip 2mm 
\noindent {\it Proof of the claim.} Suppose $i<j$ and $i\not\in
\{a_1,\ldots ,a_n\}$.  
Let $\ell$ be the column in which the box labelled $i$ in $C$ sits.  Then
\begin{eqnarray*} (S\cdot M(C))_{i,j} & = & \sum_{k=1}^d S_{i,k}M(C)_{k,j} \\
&=& S_{i,i}M(C)_{i,j} + S_{i,a_{\ell}}M(C)_{i,j} \\
&=& M(C)_{i,j} - M(C)_{a_{\ell},j}. \end{eqnarray*}
By assumption, the boxes labelled $i$ and $j$ are not in the same row of $C$ and hence if $M(C)_{i,j}=1$, then $j$ must also be in the $\ell$th column of $C$.  But then $M(C)_{a_{\ell},j}=1$, and so $(S\cdot M(C))_{i,j}=0$.  Similarly, if $M(C)_{i,j}=0$, then $j$ must be in a different column than $i$ and so $M(C)_{a_{\ell},j}=0$ as well.  The claim follows. \hfill $\square$
\vskip 2mm
Let $D$ denote the matrix obtained from $S\cdot M(C)$ by deleting the rows indexed by $a_1,\ldots ,a_n$ and the columns indexed by $a_1,\ldots ,a_n$.  Let $e_i$ denote the number of labels $\{a_1,\ldots ,a_n\}$ which appear in the $i$th row of $C$.  By the first Claim, $D$ is a block lower-triangular matrix; that is,
\[ D \ = \ \left(\begin{array}{cccc} D_1 & 0 & \cdots & 0 \\
* & D_2 & \cdots & 0  \\
\vdots & \vdots & \ddots & \vdots \\
* & * & \cdots & D_m \end{array} \right), \]
where $D_i$ is a $(d_i-e_i)\times (d_i-e_i)$ matrix.
\vskip 2mm
\noindent 
{\bf Claim:} For $1 \le i\le m$, we have $D_i = -I_{d_i-e_i}+D_i'$ for some skew-symmetric matrix $D_i'$.
\vskip 2mm
\noindent {\it Proof of the claim.} Let us first consider the matrix $S\cdot M(C)$.   Let $\{a+1, a+2,\ldots ,a+d_k\}$ be the set of labels in the $k$th row of $M(C)$.  (Here $a=d_1+d_2+\cdots +d_{k-1}$.)  Consider $(S\cdot M(C))_{a+i,a+j}$.  Since $D$ is obtained by deleting the rows and columns of $S\cdot M(C)$ indexed by $a_1,\ldots ,a_d$, we may assume that $a+i\not \in \{a_1,\ldots, a_n\}$.  In this case, the box labelled $a+i$ appears in the $\ell$th column of $C$ for some $\ell$.  Then $(S\cdot M(C))_{a+i,a+j} = M(C)_{a+i,a+j} - M(C)_{a_{\ell}, a+j}$.  Observe that $M(C)_{a_{\ell},a+j}$ is nonzero
if and only if $a+j$ is also in the $\ell$th column of $C$; but by assumption, $a+i$ is in the $\ell$th column and the boxes labelled $a+i$ and $a+j$ are in the same row, so this is impossible unless $i=j$.  Hence
$$(S\cdot M(C))_{a+i,a+j} = M(C)_{a+i,a+j} - \delta_{i,j},$$ if $a+i\not \in \{a_1,\ldots ,a_n\}$.  
To obtain $D$, we simply delete the rows and columns indexed by $\{a_1,\ldots ,a_n\}$.  In particular, if we let $A_k$ denote  the $d_k\times d_k$ submatrix of $A$ whose rows and columns are indexed by $a+1,\ldots ,a+d_k$ (see (\ref{MC})), then $D_k$ is obtained by deleting the $e_i$ rows and columns indexed by the labels $\{a_1,\ldots ,a_n\}$ which occur in the $i$th row of $C$ and then subtracting the identity.  Since each $A_k$ is skew-symmetric, the claim follows. \hfill $\square$
\vskip 2mm
We have seen that the matrix $D$ is indeed a block lower-triangular matrix; thus to finish the proof, it is enough to show that each $D_i$ is invertible.  To see this, note that it is of the form $-I+D_i'$ for some real skew-symmetric matrix $D_i'$.  Since the nonzero eigenvalues of a real skew-symmetric matrix are purely imaginary, we see that $D_i$ cannot have any eigenvalues equal to zero and hence is invertible.  It follows that the matrix $D$ is invertible.  \end{proof}

We are now in position to prove the first part of Theorem \ref{intro-bound}.

\begin{thm} Let $C$ be an $m\times n$ Cauchon diagram with $n\le m$.  Then ${\rm dim}(\ker(M(C)))\le n$. As a consequence the dimension of the $\hc$-stratum associated to $J_C$ is at most $n$. \label{thm: upper}
\end{thm}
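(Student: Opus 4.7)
The plan is to deduce the bound on $\dim_{\mathbb{Q}}(\ker M(C))$ directly from Lemma~\ref{lem: 1} and to obtain the stratum bound from Proposition~\ref{dimstratum}.

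First I would treat the case in which $C$ has no all-black column. Pick any labelling of the white boxes of $C$ by $\{1,\dots,d\}$ in the sense of the earlier definition (such a labelling exists: fill in the white boxes row by row from top to bottom, and within each row from left to right). This provides well-defined integers $a_1,\dots,a_n$, one per column. By Lemma~\ref{lem: 1}, there is a lower-triangular matrix $S$ with $1$'s on the diagonal (hence invertible) such that deleting the $n$ rows and $n$ columns indexed by $a_1,\dots,a_n$ from $S\cdot M(C)$ produces an invertible $(d-n)\times(d-n)$ matrix. Consequently $\rank(S\cdot M(C))\ge d-n$, and since $S$ is invertible this gives $\rank(M(C))\ge d-n$, so $\dim_{\mathbb{Q}}(\ker M(C))\le n$.

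Next I would reduce the general case to the previous one. Suppose $C$ has $k\ge 1$ all-black columns, and let $C'$ be the $m\times(n-k)$ diagram obtained by erasing those columns. Then $C'$ is again a Cauchon diagram: if a box is black in $C'$, it was black in $C$, so either all boxes strictly above it were black in $C$ (and thus in $C'$), or all boxes strictly to its left were black in $C$ (and in particular all those not lying in erased columns are black, i.e.\ all those strictly to its left in $C'$ are black). Since the white boxes of $C$ and $C'$ are the same in the same relative positions, we have $M(C)=M(C')$. As $n-k\le n\le m$, the previous paragraph applied to $C'$ yields
\[
\dim_{\mathbb{Q}}(\ker M(C)) \;=\; \dim_{\mathbb{Q}}(\ker M(C')) \;\le\; n-k \;\le\; n.
\]

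Finally, the bound on the dimension of the $\hc$-stratum follows immediately from Proposition~\ref{dimstratum}: $\spec_{J_C}(R)$ is homeomorphic to the prime spectrum of a commutative Laurent polynomial ring over $\mathbb{K}$ in $\dim_{\mathbb{Q}}(\ker M(C))$ indeterminates, whose Krull dimension is exactly $\dim_{\mathbb{Q}}(\ker M(C))\le n$.

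There is no real obstacle here, since the combinatorial work is already done in Lemma~\ref{lem: 1}; the only point meriting a sentence is the verification that erasing an all-black column from a Cauchon diagram yields another Cauchon diagram, which is immediate from the definition.
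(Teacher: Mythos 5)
Your proof is correct and follows essentially the same route as the paper: Lemma~\ref{lem: 1} gives $\rank(M(C))\ge d-n$ in the no-all-black-column case, the general case reduces to this by deleting all-black columns (which leaves $M(C)$ unchanged), and Proposition~\ref{dimstratum} converts the kernel bound into the stratum bound. Your added checks --- that erasing all-black columns still yields a Cauchon diagram and that the hypothesis $n-k\le m$ persists --- are details the paper leaves implicit, but the argument is the same.
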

\begin{proof} Assume first that $C$ has no all black columns. By Lemma \ref{lem: 1}, there exists an invertible matrix $S$ such that by deleting $n$ rows and $n$ columns from $S \cdot M(C)$ is invertible. This means that ${\rm dim}(\ker(S\cdot M(C))) \leq n$. Moreover ${\rm dim}(\ker(M(C)))= {\rm dim}(\ker(S\cdot M(C)))$, since $S$ is invertible.  The result follows in this case.

Assume now that $C$ has at least one all black column. Let $\widehat{C}$ be the Cauchon diagram obtained from $C$ by removing every all black column of $C$. Then $M(C)=M(\widehat{C})$, and so the result follows from the previous case.
 \end{proof}

We can now prove Theorem \ref{intro-bound}.
\begin{thm}
\label{notintro-bound}
Let $m$ and $n$ be natural numbers.  Then the dimensions of $\hc$-strata in $\oqmmn$ are all at most $\min(m,n)$; moreover, for each $d\in \{0,1,\ldots, \min(m,n)\}$ there exists a $d$-dimensional $\hc$-stratum.
\end{thm}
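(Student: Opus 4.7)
The upper bound is essentially already established. By the isomorphism $\oqmmn \simeq \oqmnm$ recalled in Section 4.1, I may assume throughout that $n \le m$, so $\min(m,n) = n$. Theorem \ref{thm: upper} then gives $\dim_{\mathbb{Q}} \ker M(C) \le n$ for every $m\times n$ Cauchon diagram $C$, and Proposition \ref{dimstratum} immediately translates this into $\dim \spec_{J_C}(R) \le n = \min(m,n)$.

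For the existence half, my plan is to exhibit, for each $d \in \{0,1,\ldots,n\}$, an explicit $m\times n$ Cauchon diagram $C_d$ whose skew-adjacency matrix has kernel of dimension exactly $d$; Proposition \ref{dimstratum} will then produce a $d$-dimensional $\hc$-stratum. The guiding observation is that a nonzero entry of $M(C)$ can only arise between two white boxes sharing a row or a column. Consequently, if I can arrange $d$ white boxes so that no two of them lie in the same row or the same column, then $M(C_d)$ will be the zero $d \times d$ matrix and $\dim_{\mathbb{Q}} \ker M(C_d) = d$ will be automatic.

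The cleanest realisation of this idea is to take $C_d$ to be the $m\times n$ diagram whose only white boxes are the diagonal positions $(1,1),(2,2),\ldots,(d,d)$, with $C_0$ being entirely black. Two short verifications remain. First, one must check that $C_d$ is a valid Cauchon diagram: given a black box $(i,j)$, if $i<j$ the only diagonal candidate white box in column $j$ strictly above $(i,j)$ would be $(j,j)$, which lies below row $i$, so no white box sits strictly above $(i,j)$ in its column; symmetrically, if $i>j$ no white box sits strictly to the left of $(i,j)$ in its row; the case $i=j$ with $i>d$ is handled by either direction. Second, by the diagonal placement no two white boxes of $C_d$ share a row or a column, so $M(C_d) = 0_{d\times d}$, giving a $d$-dimensional kernel as required.

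Neither step presents a real obstacle once the right diagrams are written down; the entire existence argument is essentially a careful choice of example together with a direct application of Proposition \ref{dimstratum}. Combining this with the upper bound derived from Theorem \ref{thm: upper} completes the proof of Theorem \ref{notintro-bound}.
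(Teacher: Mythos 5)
Your proposal is correct, and for the existence half it takes a slightly different route from the paper. The upper bound is handled identically (reduce to $n\le m$ via $\oqmmn\simeq\oqmnm$, then apply Theorem \ref{thm: upper} and Proposition \ref{dimstratum}). For existence, the paper chooses the Cauchon diagram whose white boxes form the top-left $d\times d$ block, observes that the quotient by the corresponding $\hc$-prime is isomorphic to $\co_q(M_d)$, and then cites the known fact (\cite[Theorem 2.5]{laulen}) that the $(0)$-stratum of $d\times d$ quantum matrices is $d$-dimensional. You instead take the diagonal diagram with white boxes at $(1,1),\dots,(d,d)$; since no two white boxes share a row or a column, $M(C_d)$ is the zero $d\times d$ matrix and Proposition \ref{dimstratum} gives dimension $d$ directly. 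Your verification that $C_d$ satisfies the Cauchon condition is correct (for a black box $(i,j)$ with $i<j$ the only white box in column $j$ is $(j,j)$, which lies strictly below; symmetrically for $i>j$; and all-black rows/columns beyond $d$ are trivial), and since in the quantum-matrix setting every combinatorial Cauchon diagram does index an $\hc$-prime, $J_{C_d}$ exists. What your version buys is self-containedness: the dimension count is an immediate consequence of Proposition \ref{dimstratum} with no external citation and no identification of a quotient algebra. What the paper's version buys is a link to a concrete and well-studied stratum, the $(0)$-stratum of square quantum matrices. Both arguments are complete and correct.
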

\begin{proof} We assume that $n\le m$.   By Theorem \ref{thm: upper}, the dimensions of the $\hc$-strata are all at most $n$ and so it is sufficient to show that each of these values can occur.
Let $d\le n$.  We take $P$ to be the $\hc$-prime corresponding to the $m\times n$ Cauchon diagram whose $(i,j)$ square is white if and only if $i$ and $j$ are both at most $d$.
Then $\oqmmn/P$ is isomorphic to the ring of $d\times d$ quantum matrices.
It follows from \cite[Theorem 2.5]{laulen} that the dimension of the stratum associated to $P$ is exactly $d$.  This completes the proof.
\end{proof}

\subsection{Proof of Theorem \ref{intro2}}
In this section we use Theorem \ref{notintro-increasing} along with Theorem \ref{notintro-bound} to prove Theorem \ref{intro2}.

We first make a remark that will be useful in proving the next proposition.
\begin{rem} \label{remarkparity}{\em Let $A$ be an $n\times n$ real skew-symmetric matrix.  Then the dimension of the kernel of $A$ has the same parity as $n$.}
\end{rem}

\begin{prop} 
\label{step1} Let $C$ be an $m\times n$ Cauchon diagram with $n\le m$.  Suppose that the kernel of $M(C)$ has dimension $e\ge 1$.  Then there is a Cauchon diagram $C'\supseteq C$ obtained by adding exactly one black box to $C$ such that $M(C')$ has an $(e-1)$-dimensional kernel.
\end{prop}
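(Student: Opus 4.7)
The strategy is to select a white box $(i_0, j_0)$ in $C$ to turn black, producing a larger Cauchon diagram $C'$, and to verify that $\dim \ker M(C') = e - 1$. The key general fact about skew-symmetric matrices I would use is the following: if $M$ is a $d \times d$ skew-symmetric matrix and $v \in \ker M$ satisfies $v_i \neq 0$, then deleting row $i$ and column $i$ of $M$ yields a matrix with kernel dimension $\dim \ker M - 1$. Indeed, the equations $Mv = 0$ and $v^{T} M = 0$ together with $v_i \neq 0$ express both column $i$ and row $i$ of $M$ as linear combinations of the remaining columns and rows, so the rank is unchanged under the deletion; combined with the drop by one in the size, this yields the drop by one in the kernel dimension. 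Combining this observation with Remark~\ref{remarkparity}, which forces $\dim \ker$ of any skew-symmetric matrix to match the parity of its size, I conclude that deleting a single row-column pair from $M(C)$ shifts the kernel dimension by exactly $\pm 1$, the change being $-1$ precisely when the removed index lies in the support of some kernel vector.

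The problem therefore reduces to finding a white box $(i_0, j_0)$ in $C$ such that (a) making it black preserves the Cauchon property---equivalently, $(i_0, j_0)$ is either the leftmost white box of its row or the topmost white box of its column; and (b) some $v \in \ker M(C)$ has a nonzero entry at the coordinate indexing $(i_0, j_0)$. Condition (a) ensures that $C'$, obtained from $C$ by blackening $(i_0, j_0)$, is a genuine Cauchon diagram: no previously black box has its Cauchon constraint altered by the single change, and the new black status of $(i_0, j_0)$ itself is legitimized by the leftmost/topmost condition. The matrix $M(C')$ is then $M(C)$ with the row and column of $(i_0, j_0)$ deleted, and condition (b) applied to the skew-symmetric lemma above yields $\dim \ker M(C') = e - 1$.

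The main obstacle is the combinatorial step of simultaneously securing (a) and (b). The set $B$ of Cauchon-allowed white boxes is large---each non-empty row of $C$ contributes its leftmost white box, and each non-empty column contributes its topmost---yet one must still rule out the \emph{a priori} possibility that $\ker M(C)$ is contained in the coordinate subspace of vectors vanishing on $B$. My plan for this step is to fix any $v \in \ker M(C) \setminus \{0\}$ and exploit the equations $(M(C) v)_b = 0$ for $b \in B$: if $b$ is the leftmost white box of its row then the ``strictly-to-the-left'' contributions in the equation vanish, and symmetrically for a topmost-in-its-column $b$. Together with the block decomposition of $M(C)$ whose blocks are indexed by the rows of $C$, a careful induction on a suitable well-ordering of the white boxes should propagate this vanishing and force $v = 0$, thereby yielding the desired box and completing the proof.
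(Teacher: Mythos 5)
Your reduction is sound, and the skew-symmetric deletion lemma you isolate is correct and in fact tidier than what the paper does: since $M(C')$ is $M(C)$ with one row--column pair removed, deleting an index at which some kernel vector is nonzero preserves the rank and hence drops the kernel dimension by exactly one, while deleting an index at which every kernel vector vanishes raises it by one (by the parity observation of Remark~\ref{remarkparity}). The paper reaches the same dichotomy more laboriously, by padding bases of $\ker M(C_i)$ with a zero coordinate and analysing the surjectivity of the functional $w \mapsto r\cdot w$. Your characterisation of the blackenable boxes (leftmost white box of its row or topmost white box of its column) also agrees with the set $T$ used in the paper.

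The genuine gap is the step you yourself flag as the main obstacle: proving that $\ker M(C)$ is not contained in the coordinate subspace of vectors vanishing on the set $B$ of blackenable boxes. You offer only a plan (``a careful induction on a suitable well-ordering \dots should propagate this vanishing and force $v=0$''), and this step does not follow from a routine induction: for $b$ the leftmost white box of its row, the equation $(M(C)v)_b=0$ still couples all white boxes to the right of $b$ in its row with all white boxes above and below $b$ in its column, so the vanishing does not propagate coordinate by coordinate. Supplying this step is precisely the content of Lemma~\ref{lem: 1}: one performs row operations (subtracting, from the row indexed by each white box, the row indexed by the topmost white box $a_k$ of the same column) so that after deleting the rows and columns indexed by $a_1,\dots,a_n$ the matrix becomes block lower-triangular, and each diagonal block has the form $-I+D'$ with $D'$ skew-symmetric, hence is invertible over $\mathbb{R}$ because real skew-symmetric matrices have purely imaginary eigenvalues. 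A kernel vector vanishing on $\{a_1,\dots,a_n\}\subseteq B$ then restricts to a kernel vector of that invertible matrix and must be zero. Until you supply an argument of this strength (note that only the topmost white boxes of the columns are needed, not all of $B$), the proof is incomplete.
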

\begin{proof} Clearly we can assume that $C$ has no all black columns. 
Let $d$ be the number of white boxes in $C$. We make $C$ into an $m\times n$ labelled Cauchon diagram with labels $\gc 1 ,d\dc$.

Let $T=\{i_1,\ldots ,i_k\}$ denote the set of all labels of white boxes of $C$ with the property that if one of these labels is coloured black and the remaining boxes of $C$ are left unchanged then the result is still a Cauchon diagram.

For example, in Figure 3 the labels of the white boxes which can be coloured black to obtain a Cauchon diagram are $1,3,4,8$ and $15$.

Given $i\in T$, we let $C_i$ denote the Cauchon diagram obtained by colouring the white box with label $i$ black.  

If ${\rm dim}({\rm ker}(M(C_i)))\ge e$ for every $i\in T$, then by parity considerations (see Remark \ref{remarkparity}) we must have
$${\rm dim}({\rm ker}(M(C_i)))\ge e+1,$$ for $i\in T$.  Let $$\{v_1^{(i)},\ldots ,v_{e+1}^{(i)}\}$$ be a linearly independent set of vectors in the kernel of $M(C_i)$.  We construct $e+1$ vectors as follows.  For $1\le j\le e+1$, let $w_j^{(i)}$ be the $d\times 1$ column vector whose $\ell$th coordinate is the $\ell$th coordinate of $v_j^{(i)}$ if $\ell < i$, is $0$ if $\ell=i$, and is the $(\ell-1)$ coordinate of $v_j^{(i)}$ if $\ell>i$.  By construction, every row of $M(C)$ is orthogonal to the linearly independent set $\{w_1^{(i)},\ldots ,w_{e+1}^{(i)}\}$ except for possibly the $i$th row.  Let $r$ denote the $i$th row of $M(C)$.  Then $r$ is an $1\times d$ row vector.  We then have a map 
$${\rm Span}(\{w_1^{(i)},\ldots ,w_{e+1}^{(i)}\})\rightarrow \mathbb{Q}$$ in which a vector $w$ in the span is sent to $r\cdot w\in \mathbb{Q}$. This map is surjective since otherwise the dimension of the kernel of $M(C)$ would be at least $(e+1)$-dimensional. Thus the kernel of this map is a $e$-dimensional subspace of $${\rm Span}(\{w_1^{(i)},\ldots ,w_{e+1}^{(i)}\})$$ which lies in the kernel of $M(C)$.  Since the kernel of $M(C)$ is exactly $e$-dimensional, and every vector in ${\rm Span}(\{w_1^{(i)},\ldots ,w_{e+1}^{(i)}\})$ has a zero in the $i$th coordinate, we see that every vector in ${\rm ker}(M(C))$ has a zero in the $i$th coordinate; moreover, this is the case for every $i\in T$.   For $1\le j\le n$, recall that $a_j$ denotes the smallest label which appears in column $j$ of $C$.  Then $\{a_1,\ldots ,a_n\}\subseteq T$, as they are the labels of the upper-most white boxes in each column.  By Lemma \ref{lem: 1} there is a lower-triangular matrix $S$ such that
$S\cdot M(C)$ has the property that if columns $a_1,\ldots ,a_n$ and rows $a_1,\ldots ,a_n$ are deleted then the resulting matrix is invertible.  Let $\{v_1,\ldots ,v_e\}$ be a basis for ${\rm ker}(M(C))$.  For each $i\le e$, let $u_i$ denote the $(d-n)\times 1$ column vector obtained by taking the $d\times 1$ vector $v_i$ and simply removing coordinates $a_1$ through $a_n$.  Since the $a_j$th coordinate of $v_i$ is $0$ for each $i$ and $j$, we see that $u_1,\ldots ,u_e$ are linearly independent and are in the kernel of the matrix obtained by removing columns $a_1,\ldots ,a_n$ from $S\cdot M(C)$.  Thus these vectors are in the kernel of the matrix obtained by deleting columns $a_1,\ldots ,a_n$ and rows $a_1,\ldots ,a_n$ of $S\cdot M(C)$.  But this contradicts the fact that the matrix obtained by deleting columns $a_1,\ldots ,a_n$ and rows $a_1,\ldots ,a_n$ of $S\cdot M(C)$ is invertible.  It follows that there is some $i\in T$ such that the Cauchon diagram $C_i$ has ${\rm dim}({\rm ker}(M(C_i))\le e-1$. We claim that ${\rm dim}({\rm ker}(M(C_i))= e-1$. To see this, observe that it is no loss of generality to assume that in the basis 
$\{v_1,\ldots ,v_e\}$ of ${\rm ker}(M(C))$ that the vectors $v_1, \ldots, v_{e-1}$ have a zero $i$th coordinate. Then the vectors $v_1', \ldots, v_{e-1}'$ obtained by deleting the $i$th coordinate from $v_1, \ldots, v_{e-1}$ are in the kernel of $M(C_i)$ and are linearly independent. The result follows. \end{proof} 

Recall from \cite[Theorem II.8.4]{bgbook} that an $\hc$-prime ideal in $R=\oqmmn$ is primitive if and only if its associated $\hc$-stratum is $0$-dimensional.
\begin{thm}
\label{chain}
Let $J_C$ be the $\hc$-prime of $R=\oqmmn$ associated to the Cauchon diagram $C$. Suppose that the dimension of the $\hc$-stratum associated to $J_C$ is equal to $e$. Then there is a chain $C=C_0 \subsetneq C_1 \subsetneq \dots \subsetneq C_e$ of $m\times n$ Cauchon diagrams such that:
\begin{itemize}
\item $J_C  =J_{C_0} \subsetneq J_{C_1} \subsetneq \dots \subsetneq J_{C_e}$; and
\item for all $i$ the dimension of the $\hc$-stratum associated to $J_{C_i}$ is $e-i$. 
\end{itemize}
In particular, $J_{C_e}$ is a primitive $\hc$-prime ideal in $\oqmmn$.
\end{thm}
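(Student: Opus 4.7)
The plan is to build the chain by iterating Proposition \ref{step1}, using Theorem \ref{notintro-increasing} to upgrade the containments of Cauchon diagrams to strict containments of the corresponding $\hc$-primes, and Proposition \ref{dimstratum} to control the stratum dimensions.

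More precisely, I would proceed by induction on $e$. The base case $e=0$ is trivial: set $C_0 := C$ and the chain has length zero; the primitivity assertion is empty. For the inductive step, assume the result holds whenever the stratum dimension is strictly less than $e$, and suppose $\dim \spec_{J_C}(R) = e \ge 1$. By Proposition \ref{dimstratum}, this means $\dim_{\BbQ} \ker M(C) = e$. Proposition \ref{step1} then supplies a Cauchon diagram $C_1 \supsetneq C$, obtained from $C$ by colouring a single white box black, such that $\dim_{\BbQ} \ker M(C_1) = e-1$. Applying Proposition \ref{dimstratum} again, the stratum associated to $J_{C_1}$ has dimension exactly $e-1$.

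Because $C = C_0 \subsetneq C_1$ as Cauchon diagrams, Theorem \ref{notintro-increasing} yields $J_{C_0} \subsetneq J_{C_1}$ (the strictness is the crucial output of that theorem, not just containment). Now apply the induction hypothesis to the Cauchon diagram $C_1$, whose stratum is $(e-1)$-dimensional: there is a chain
\[
C_1 \subsetneq C_2 \subsetneq \cdots \subsetneq C_e
\]
with $J_{C_1} \subsetneq J_{C_2} \subsetneq \cdots \subsetneq J_{C_e}$ and such that the stratum of $J_{C_i}$ has dimension $e-i$. Prepending $C_0 = C$ gives the desired chain.

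Finally, since the stratum associated to $J_{C_e}$ is $0$-dimensional, the characterisation of primitive $\hc$-primes recalled just before the statement (an $\hc$-prime of $\oqmmn$ is primitive if and only if its stratum is $0$-dimensional) immediately gives that $J_{C_e}$ is a primitive ideal. The one subtle point in this argument, already addressed by the cited results, is ensuring that Proposition \ref{step1} decreases the kernel dimension by exactly one (as opposed to merely $\le e-1$), so that the chain has the prescribed length $e$; this is what makes the induction go through cleanly and is the only nonroutine ingredient.
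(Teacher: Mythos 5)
Your proposal is correct and follows essentially the same route as the paper: induction on $e$, with Proposition \ref{step1} supplying the new black box, Proposition \ref{dimstratum} converting kernel dimension to stratum dimension, and Theorem \ref{notintro-increasing} giving the strict inclusion of $\hc$-primes. The "subtle point" you flag (that the kernel dimension drops by exactly one) is indeed handled inside the paper's proof of Proposition \ref{step1}, so nothing further is needed.
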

\begin{proof} We prove this by induction on $e$.  If $e=0$, then $J_C$ is primitive and there is nothing to prove.  

Suppose now that $e >0$. By Proposition \ref{step1}, there exists a Cauchon diagram $C_1$ obtained by turning a single white box of $C$ black such that
${\rm ker}(M(C_1))$ has dimension $e-1$. As $C=C_0 \subsetneq C_1$, it follows from Theorem \ref{notintro-increasing} that $J_C  =J_{C_0} \subsetneq J_{C_1}$. 
Moreover it follows from Proposition \ref{dimstratum} that the dimension of the $\hc$-stratum associated to $J_{C_1}$ is equal to $\dim {\rm ker}(M(C_1)) =e-1$. The result is obtained by applying the induction hypothesis to $J_{C_1}$.
\end{proof}

\section{A conjecture}
Our results show that the possible dimensions of the $\hc$-strata of $\oqmmn$ are $\{0,1,2,\ldots, \min(m,n)\}$; moreover, every one of these values occurs as the dimension of some $\hc$-stratum.  What is still unresolved, however, is how exactly the dimensions of $\hc$-strata of $\oqmmn$ are distributed.  Earlier, we investigated the enumeration of $0$-dimensional $\hc$-strata in $\oqmmn$ \cite{BLN,BLL}; this is the same as enumerating the primitive $\hc$-primes of $\oqmmn$.  We make a general conjecture about the proportion of $i$-dimensional $\hc$-strata in $\oqmmn$ with $i\le m$ and $n\ge m$.
\begin{conj} Let $m$ be a natural number.  Then for $0\le i\le m$ we have
$$\lim_{n\rightarrow\infty} \frac{ \# i\mbox{-}{ \rm dimensional~} \hc\mbox{-}{\rm strata ~in ~ } \oqmmn}{ \# \hc  \mbox{-}{\rm strata ~in ~ } \oqmmn} \ = \ 2^{1-\delta_{i,0}} {2m \choose m+i}4^{-m}$$ as $n\rightarrow \infty.$
\end{conj}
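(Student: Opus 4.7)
The plan is to use a transfer-matrix approach, extending the enumeration of primitive $\hc$-primes in \cite{BLN,BLL} so as to keep track of the kernel dimension of $M(C)$. For fixed $m$, each $m\times n$ Cauchon diagram $C$ is encoded as a sequence of columns $c_1,\ldots,c_n\in\{0,1\}^m$ (with $1$ marking black boxes), and the rule in Definition \ref{cdef} translates into a constraint on which ordered pairs $(c_{j-1},c_j)$ are admissible. This gives a transfer matrix $T$, of size depending only on $m$, whose powers count $\Ca$: explicitly, $|\Ca|$ equals the sum of the entries of $T^{n-1}$ applied to a suitable initial vector.

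The heart of the proposed proof is to enrich this state space to produce a refined transfer matrix $\widetilde T$, again of size bounded in terms of $m$, together with a linear functional reading off $N_i(n):=\#\{C\in\Ca : \dim\ker M(C)=i\}$ from $\widetilde T^{n-1}$. Once such a $\widetilde T$ is in hand, Perron--Frobenius applied simultaneously to $T$ and $\widetilde T$ would reduce the conjectured limit to a finite spectral computation, and the hope is that the Perron vector of $\widetilde T$ takes an explicit combinatorial form compatible with $2^{1-\delta_{i,0}}\binom{2m}{m+i}4^{-m}$.

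The main obstacle, in my view, is the construction of this refined transfer matrix: one needs a structural lemma describing how $\dim\ker M(C)$ is updated under the addition of a column using only finitely much state information. The striking form of the right-hand side makes me optimistic, since $2^{1-\delta_{i,0}}\binom{2m}{m+i}/4^m$ is exactly the distribution of $|S_{2m}|$, where $(S_k)$ is a simple symmetric random walk. This strongly suggests that, after row and column operations of the type used in the proof of Lemma \ref{lem: 1} (where a lower-triangular $S$ was already exploited to reveal the kernel), $\dim\ker M(C)$ can be identified with the absolute value of a $\pm 1$ walk of length $2m$ whose increments are read off from column-transition statistics of $C$. The factor $2^{1-\delta_{i,0}}$ would then come from the fact that a walk and its negative yield distinct Cauchon diagrams except when the walk ends at $0$.

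Finally, assuming such a walk description can be established, the Perron vector of the basic transfer matrix $T$ should induce a limiting distribution on column transitions under which the $2m$ walk increments become asymptotically independent and uniform on $\{-1,+1\}$; a local-limit or direct generating-function computation would then give $|S_{2m}|$ the claimed binomial law and thereby yield the conjecture.
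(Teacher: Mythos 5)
This statement is a \emph{conjecture} in the paper: the authors offer no proof, only verification for the handful of pairs $(i,m)\in\{(0,1),(1,1),(0,2),(1,2),(2,2),(0,3),(2,3)\}$ and computational evidence. So there is no paper proof to compare against, and your proposal must stand on its own. It does not: it is a strategy outline with a genuine gap at its centre, which you yourself flag. The entire argument is conditional on the existence of a refined transfer matrix of size bounded in terms of $m$ from which $\dim_{\mathbb{Q}}\ker M(C)$ can be read off column by column. Nothing in the paper supplies this. The matrix $M(C)$ is $d\times d$ with $d$ as large as $mn$, and its kernel dimension is a global linear-algebraic invariant; the paper's tools (Lemma \ref{lem: 1}, which exhibits an invertible $(d-n)\times(d-n)$ minor after a lower-triangular change of basis, and Proposition \ref{step1}, which drops the kernel dimension by one by blackening a single box) give an upper bound of $n$ and a descent mechanism, but neither yields a local update rule for $\dim\ker M(C)$ under appending a column using only finitely much state. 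Establishing such a rule — or your stronger proposed identification of $\dim\ker M(C)$ with half the absolute value of a $\pm1$ walk of length $2m$ determined by column-transition statistics — would be a substantive new theorem, and it is precisely the missing content of the conjecture. The subsequent Perron--Frobenius step and the claimed asymptotic independence and uniformity of the walk increments under the limiting column-transition distribution are likewise asserted, not proved.

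That said, the ingredients you assemble are sensible and consistent with the authors' own prior work: the reference \cite{BLL} is explicitly automaton-theoretic and handles the $i=0$ case (primitive $\hc$-primes), so a transfer-matrix count of the denominator is realistic, and your observation that $2^{1-\delta_{i,0}}\binom{2m}{m+i}4^{-m}$ is exactly the law of $\tfrac{1}{2}|S_{2m}|$ for a simple symmetric walk (correctly normalized, summing to $1$ over $0\le i\le m$) is a genuinely suggestive reading of the right-hand side that the paper does not articulate. But as written this is a research programme, not a proof; the statement remains open after your argument exactly where it was open before it.
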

We have shown this for $(i,m)\in \{(0,1),(1,1),(0,2),(1,2),(2,2),(0,3),(2,3)\}$; moreover, extensive computer computations suggest this is true in general.\\

\begin{flushleft}
\textbf{Acknowledgments.} We thank the anonymous referee for his/her  comments; they have greatly improved this text.
\end{flushleft}

\newpage

 \end{document}